\theoremstyle{plain}
\numberwithin{equation}{section}
\newtheorem{theorem}{Theorem}[section]
\newtheorem*{theorem*}{Theorem}
\newtheorem{corollary}[theorem]{Corollary}
\newtheorem{lemma}[theorem]{Lemma}
\newtheorem{fact}[theorem]{Quick Fact}
\newcommand{\calO}{\mathcal{O}}
\DeclareMathOperator{\hess}{Hess}
\DeclareMathOperator{\ric}{Ric}
\newcommand{\R}{\mathbb{R}}
\title[On q-solitons] {On non-compact gradient solitons}%to a general flow}
\author[Antonio W. Cunha and Erin Griffin]{Antonio W. Cunha$^1$ and Erin Griffin$^2$}
\address{$^1$ Departamento de P\'os-Gradua\c c\~ao em Matem\'{a}tica, Universidade Federal do Piau\'{\i}, 64049-550 Teresina, Piau\'i, Brazil.}
\email{wilsoncunha@ufpi.edu.br}
\address{$^2$ Department of Mathematics, Seattle Pacific University, Seattle, Washington, USA.}
\email{griffine@spu.edu}
\subjclass[2010]{Primary 53C25}
\keywords{ $q$-solitons,  $q$-flat, Cotton solitons, ambient obstruction solitons, Bach solitons.}
\thanks{The work of the first author was partially supported by CNPq, Brazil (Grant: 430998/2018-0),  FAPEPI (PPP-Edital 007/ 2018) and PROPESQI/PRPG/UFPI-EDITAL Nº09/2022. The authors are grateful to Eric Bahuaud and Will Wylie for their helpful conversations while writing this paper.}
\begin{document}

\begin{abstract}
In this paper we extend existing results for generalized solitons, called $q$-solitons, to the complete case by considering non-compact solitons. By placing regularity conditions on the vector field $X$ and curvature conditions on $M$, we are able to use the chosen properties of the tensor $q$ to see that such non-compact $q$-solitons are stationary and $q$-flat. 

We conclude by applying our results to the examples of ambient obstruction solitons, Cotton solitons, and Bach solitons to demonstrate the utility of these general theorems for various flows. 

%This demonstrates the utility of the generalized results and reaffirms a number of established results for these specific flows. 
%In fact, under either non-negative or non-positive Ricci curvature condition we are able to show that these solitons must be flat, since it satisfies a finite weighted Dirichlet integral condition or a parabolicity condition jointly with some regularity conditions $L^\infty$ or $L^p$ on gradient of the potential function.
\end{abstract}

\maketitle

\section{Introduction and Main Results} \label{intro section}

%\subsection{The $q$-flow}
The {\em $q$-flow} is the geometric flow defined by
\begin{equation} \label{q-flow}
\frac{\partial}{\partial t}g(t)=q,\,\,\,{\rm with}\,\,\,g(0)=h,
\end{equation}
where $q$ is a symmetric 2-tensor and $h$ is the initial metric. The associated {\em $q$-solitons} are given by
\begin{equation}\label{q soliton}
\frac{1}{2}\mathcal{L}_Xg=\lambda g+\frac{1}{2}q,
\end{equation}
where $X$ is a vector field and $\lambda$ is a constant. When $X=\nabla f$ for some smooth function $f$ on $M$, the $q$-soliton is said to be a {\em gradient $q$-soliton} and equation \eqref{q soliton} becomes
\begin{equation}\label{grad q soliton}
{\rm Hess} f=\lambda g+\frac{1}{2}q,
\end{equation}
where the Hessian, ${\rm Hess} f$, is the matrix of second order partial derivatives of the {\em potential function} $f$. The coefficient of $q$ was chosen to show explicitly that gradient solitons are self-similar solutions to the q-flow in\cite[Theorem 3.13]{Griffin}. (More generally, in \cite{Lauret1, Lauret2}, Lauret shows that solitons are self similar solutions to the flow.) It is easy to show that this definition aligns with definitions considered by several authors (see for example \cite{Kar}, \cite{Ho}).

As with others solitons (e.g. Ricci solitons, Yamabe solitons, Ricci-Bourguignon solitons, etc.), we say that the $q$-soliton is {\em expanding}, {\em steady}, and {\em shrinking} if $\lambda<0$, $\lambda=0$, and $\lambda>0$, respectively. We also say that the $q$-soliton is {\em stationary} if $X$ is a Killing vector or, in the gradient case, if it has constant potential function. 

In \cite{Griffin}, the second author proved that for any  divergence-free, trace-free tensor $q$, any compact $q$-soliton is $q$-flat (see \cite[Lemma 3.6]{Griffin}). This generalized the well-known result for Ricci solitons that any compact Ricci soliton with constant scalar curvature is Einstein (see \cite{PW}). In this paper we will expand this generalization to the complete case as follows:

% \begin{theorem}\label{thm Intro} Let $(M,g,X)$ be an $n$-dimensional complete soliton to the $q$-flow for a divergence-free, trace-free tensor $q$. Further, suppose the Ricci curvature of $(M,g,X)$ satisfies: 
% \[-(n-1)\frac{k^2}{1+r^2} \leq \ric \leq 0\]
% in the sense of quadratic forms. 
% If $\int_{M\diagdown B_r(x_0)}d(x,x_0)^{-2}|X|^2<\infty$, then $(M,g)$ is stationary and $q$-flat.
% \end{theorem}

\begin{theorem}\label{thm Intro 01} Let $(M,g,X)$ be an $n$-dimensional complete soliton to the $q$-flow for a divergence-free, trace-free tensor $q$. Further, suppose $(M,g,X)$ has non-positive Ricci curvature. If $M$ is parabolic and $|X|\in L^\infty(M)$ or if $|X|\in L^p(M)$ for $p>1$, then $(M,g)$ is stationary and $q$-flat.
% \begin{enumerate}
% \item If $M$ is parabolic and $|X|\in L^\infty(M)$ 
% \item If $|X|\in L^p(M)$ for $p>1$
% \end{enumerate}
\end{theorem}

We will see in Section \ref{Q Section} that in the non-compact case it is frequently necessary to put conditions on $X$, $|X|$, and/or on the curvature of the manifold. (e.g. In the above theorem we needed our manifold to have non-positive Ricci curvature.) These are reasonable conditions to ask for that don't reduce us to trivial cases. For example, consider a gradient Bach soliton in dimension $n=4$. As we discuss later, the Bach tensor in $n=4$ is trace-free and divergence-free. As showed in \cite{Griffin}, the manifold $\R \times E(2)$ produces a Bach-flat gradient Bach soliton with potential function $f(x) = ax+b$ when a certain metric condition is met. This metric condition forces the manifold to be Ricci flat and, consequently, have non-positive Ricci curvature. Since $X = \nabla f = 0$, $\R \times E(2)$ (trivially) satisfies the conditions of Theorem \ref{thm Intro 01} showing that there are indeed interesting examples to explore that fit the curvature condition.

As in the compact case, when we focus our attention on gradient solitons we are able to drop the requirement that our tensor, $q$, be divergence-free. That is, we get results for $q$ that are only trace-free. The following is one such result.

\begin{theorem}\label{thm intro 02}
Let $(M, g, f)$ be a complete gradient $q$-soliton with non-negative sectional curvature and the gradient of $f$ satisfies $\int_M \vert \nabla f \vert ^{\frac{n}{n-1}} < \infty.$ If $q$ is trace-free, then $M$ is $q$-flat.
\end{theorem}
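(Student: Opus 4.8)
The plan is to show that the hypotheses force $\hess f = 0$, from which $q$-flatness is immediate by the trace-free assumption. First I would trace the gradient soliton equation \eqref{grad q soliton}. Since $q$ is trace-free, $\operatorname{tr}(\hess f) = \Delta f = n\lambda + \tfrac12\operatorname{tr} q = n\lambda$, so $\Delta f$ is constant and $\nabla\Delta f = 0$. Feeding this into the Bochner formula gives $\tfrac12\Delta|\nabla f|^2 = |\hess f|^2 + \ric(\nabla f,\nabla f)$. Non-negative sectional curvature implies non-negative Ricci curvature, so $\ric(\nabla f,\nabla f)\ge 0$ and hence $\tfrac12\Delta|\nabla f|^2 \ge |\hess f|^2 \ge 0$; in particular $|\nabla f|^2$ is subharmonic.

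The crucial step, and the reason the exponent $\tfrac{n}{n-1}$ appears, is to upgrade this to subharmonicity of the first power $u := |\nabla f|$. Using the standard Kato inequality $|\nabla|\nabla f||^2 \le |\hess f|^2$ together with the identity $\tfrac12\Delta|\nabla f|^2 = |\nabla f|\,\Delta|\nabla f| + |\nabla|\nabla f||^2$, I would obtain $|\nabla f|\,\Delta u \ge \ric(\nabla f,\nabla f) \ge 0$, so that $u$ is (weakly) subharmonic on $M$. The point is that $u$, rather than $u^2$, now lies in $L^p(M)$ with $p = \tfrac{n}{n-1} > 1$, which is exactly the regime in which a non-negative subharmonic $L^p$ function on a complete manifold must be constant (a Yau/Karp-type Liouville theorem); had we worked with $u^2$ directly, the hypothesis would only give $u^2 \in L^{n/(2(n-1))}$, whose exponent is $\le 1$ and hence useless.

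To keep this self-contained I would run the standard cutoff argument: fix a Lipschitz cutoff $\phi$ equal to $1$ on $B_r(x_0)$, supported in $B_{2r}(x_0)$, with $|\nabla\phi|\le C/r$, and test the inequality $\Delta u \ge 0$ against $\phi^2 u^{p-1}$. Integration by parts and Cauchy--Schwarz yield a bound of the form $\int \phi^2 u^{p-2}|\nabla u|^2 \le \tfrac{C}{r^2}\int_{B_{2r}\setminus B_r} u^p$, whose right-hand side tends to $0$ as $r\to\infty$ because $u\in L^p(M)$. Hence $\nabla u\equiv 0$ and $u=|\nabla f|$ is constant. Returning to the Bochner identity, $\Delta|\nabla f|^2 = 0$ forces $|\hess f|^2 = 0$, i.e. $\hess f = 0$; then $\tfrac12 q = \hess f - \lambda g = -\lambda g$, and taking traces with $q$ trace-free gives $\lambda = 0$ and therefore $q\equiv 0$, so $M$ is $q$-flat.

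I expect the main obstacle to be analytic bookkeeping rather than any conceptual difficulty: namely justifying that $u=|\nabla f|$ is subharmonic in the distributional sense across the possibly irregular zero set $\{\nabla f = 0\}$, and ensuring the boundary terms in the cutoff integration genuinely vanish. The cleanest fix for the first issue is to work with the regularized quantity $u_\varepsilon = \sqrt{|\nabla f|^2 + \varepsilon^2}$, carry out the cutoff estimate for $u_\varepsilon$, and let $\varepsilon\to 0$; completeness of $M$ is what guarantees the existence of the exhausting cutoffs $\phi$ used throughout.
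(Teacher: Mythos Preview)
Your argument is correct but takes a genuinely different route from the paper's. The paper's proof is much shorter: from $\Delta f = n\lambda$ one has that $f$ (if $\lambda\ge 0$) or $-f$ (if $\lambda<0$) is subharmonic, and then Karp's Stokes-type theorem \cite[Corollary~2$'$]{Karp} is applied directly to $f$ itself. That result says that on a complete non-compact manifold with non-negative sectional curvature, a subharmonic function whose gradient lies in $L^{n/(n-1)}$ must be constant; this is where both the sectional-curvature hypothesis and the specific exponent $\tfrac{n}{n-1}$ enter. Once $f$ is constant, $\lambda=0$ and $q=0$ follow immediately.

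By contrast, you work one level higher: you pass to $u=|\nabla f|$, use Bochner plus Kato to make $u$ subharmonic, and then invoke Yau's $L^p$ Liouville theorem (Lemma~\ref{lemma Yau}) with $p=\tfrac{n}{n-1}>1$. This is more labor-intensive and requires the $u_\varepsilon$ regularization you flag, but it has a real payoff: your argument only uses $\ric\ge 0$, not non-negative sectional curvature, so it in fact proves a slightly stronger statement than the one in the paper. On the other hand, the paper's approach makes transparent \emph{why} the exponent $\tfrac{n}{n-1}$ is the natural one---it is dictated by Karp's divergence theorem---whereas in your approach any $p>1$ would do and the particular exponent looks incidental.
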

In Subsection \ref{no div free} we examine other conditions on $X$ that would allow us to focus on tensors that are only trace-free and conclude by looking at a quick example of such a tensor.

\subsection{Motivation}
In the second author's 2020 paper, \cite{Griffin}, she introduces a general geometric flow for a general tensor $q(g)$, \eqref{q-flow}, and uses this structure to find results for generalized solitons. She proceeds by applying these results via corollary to the ambient obstruction tensor. This approach has three major benefits that we will continue to see in this paper. As such we will discuss these benefits briefly to motivate our work.

First, this approach allows us to tackle tensors that are difficult to work with explicitly but that are valuable to understand. The ambient obstruction tensor, in particular, has a complicated explicit representation, but it is both trace-free and divergence-free. (Because these are also properties of the Cotton and Bach tensors, we often stipulate that $q$ is trace-free and/or divergence-free.) Approaching the study of ambient obstruction solitons using a general tensor allows us to focus on these properties and avoid the need to work explicitly. As for why this is valuable, we see that the ambient obstruction tensor, Bach tensor, and Cotton tensor have unique connections to conformal geometry and physics that make them important tensors to understand. Furthermore, the investigation of solitons of these flow has already produced results that differ from the study of Ricci flow. One key example is the existence of an expanding homogeneous gradient ambient obstruction soliton for $n=4$. There are no such Ricci solitons, however, in \cite[Theorem 4.12(d)]{Griffin}, the first author shows that $\R\times S^3$ produces such a Bach soliton.

Second, this approach produces a tool that can be used to find results for a various flows. After establishing results for a general tensor $q$ in Section \ref{Q Section}, we apply these results to the ambient obstruction tensor in Section \ref{AOT Section}, then  to the Cotton tensor in Section \ref{Cotton Section}, and finally to the Bach tensor ($n\geq 5$) in Section \ref{Bach Section}. The last two applications will replicate results from \cite{CN} and \cite{CLM}, respectively, but we have included these results to demonstrate the utility of our approach and to unify results.

Most importantly, this approach allows us to see the nuanced interplay between the properties of $q$ and requirements on solitons. One example arises when we compare Lemma 3.6(c) and Proposition 3.12 in \cite{Griffin}. While both have the same conclusion, Proposition 3.12 only requires that $q$ be trace-free but is limited to gradient solitons;  Lemma 3.6 requires the $q$ is trace-free and divergence-free but applies to all solitons. This nuance would have been hidden had we used the ambient obstruction flow directly, taking away our ability to apply these results to, for example, the Bach tensor for $n\geq 5$ with reduced requirements on our manifold.

Continuing to use this approach, we will see that all of these benefits continue to exist and will, additionally, enable us to collect various results and to produce a more complete resource.

\subsection{Overview of Examples}

Following the methodology set forth in \cite{Griffin}, we apply our results for a general $q$ to a few examples of tensors that are both trace-free and divergence-free. Our direct application of the theorems is intended to demonstrate the versatility of the general results and avoids the nuances that arise in the proofs in these specific settings.

In dimension $n=3$, we examine Cotton Solitons. The Cotton tensor is given by
\[ C_{ijk} = (\nabla_iS)_{jk} - (\nabla_j S)_{ik},\]
where $S$ is the Schouten tensor. The Cotton tensor vanishes only for conformally flat manifolds and is itself the unique conformal invariant in dimension 3. The (0,2)-Cotton tensor (also referred to as the Cotton-York tensor) is given by
\begin{equation} \label{CY Eqn} C_{ij} = \frac{1}{2\sqrt{g}}C_{mni}\epsilon^{nml}g_{lj},\end{equation}
where $\epsilon^{123}=1$. It is well established that this tensor is divergence-free and trace-free. Thus, it's geometric flow and resulting solitons will provide an interesting example in dimension $n=3$. (For further background on the Cotton flow, see \cite{Kisisel, GR, York}).

Another such example is the Bach tensor in dimension $n=4$ given explicitly by the equation:
\begin{equation} \label{Bach 4 Eqn} B_{ij} = \nabla^k \nabla^l W_{ikjl} + \frac{1}{2} R_{kl} W_{i\;\; j}^{\;k\;\, l}. \end{equation}
Limiting our scope (for now) to $n=4$, we note that the Bach tensor is trace-free, divergence-free, and conformally invariant of weight $-2$. \cite{Cao} Further, since the Bach tensor can be realized as the gradient of the Weyl energy in this context, any Bach-flat metrics are critical points of the Weyl energy. It is also well established that Einstein metrics and (anti) self-dual metrics are Bach-flat. Thus, we see evidence that establishing instances of Bach-flat metrics is a useful pursuit. More than just this abstract case for Bach-flat metrics, we also see in the literature that Bach-flat manifolds are a good environment to work within. (See, for example, \cite{Kim}.)

There are two higher dimensional generalizations of the Bach tensor that maintain being both trace-free and divergence-free. However, fundamental differences in the nature of these two generalizations give rise to two distinct flows that might be called the Bach flow. As such, we present results in both contexts to show what settings need to be considered in each case.

The first generalization is the ambient obstruction tensor, given below, which provides a generalization in even dimension $n\geq 4$.
\begin{equation}\label{AOT Eqn}
\begin{gathered}  
\calO_n = \frac{1}{(-2)^{\frac{n}{2}-2} \left(\frac{n}{2}-2 \right)! } \left( \Delta^{\frac{n}{2}-1} P - \frac{1}{2(n-1)} \Delta^{\frac{n}{2}-2} \nabla^2 S \right) + T_{n-1} \\
 P = \frac{1}{n-2} \left( \ric - \frac{1}{2(n-1)} S_g \right) \end{gathered}\end{equation}
where $P$ is the Schouten tensor and $T_{n-1}$ a polynomial natural tensor of order $n-1$. This generalization naturally maintains the useful properties the Bach tensor has in dimension $n=4$. \cite{FG, GH, Lopez}

The second generalization of the Bach tensor is the Bach tensor itself considered in dimension $n\geq 5$, given explicitly by: \begin{equation}\label{Bach n Eqn} B_{ij} = \frac{1}{n-3} \nabla^k \nabla^l W_{ikjl} + \frac{1}{n-2} R^{kl} W_{ikjl}.\end{equation} 
While it is well established that the Bach tensor is trace-free for all $n\geq 4$, we note that by \cite{Cao} the divergence of the Bach tensor is given by:
\[ \div B_n \equiv \nabla_j B_{ij} = \frac{n-4}{(n-2)^2} C_{ijk} R_{jk}\]
Consequently, for $n\geq 5$ one need only consider manifolds with harmonic Weyl curvature, $\delta W=0$, to have a setting in which the Bach tensor is also divergence-free.

We will provide background on each tensor, their flows, and their solitons in Sections \ref{AOT Section} and Sections \ref{Bach Section} and show how our results can be applied in either case. In the cases of the Cotton tensor and the general Bach tensor, applying our general results to these tensors enabled us to duplicate a number of established results in upcoming papers. Because these corollaries are both stand alone results in other works and corollaries to results in Section \ref{Q Section}, we included both references in the proofs of each corollary so that interested readers can see the nuances these proofs take on in their specific setting while maintaining the completeness of this paper. 

%All this considered, we begin by applying our general results to the ambient obstruction tensor in Section \ref{AOT Section} because these results are unique to this paper. Proceeding, we go in order of dimension, addressing first the Cotton tensor in Section \ref{Cotton Section} and then the Bach tensor for $n\geq 4$ in Section \ref{Bach Section}. 

% The Bach tensor is itself defined in higher dimensions, in dimension $n\geq4$ is the called {\em Bach tensor} $B_{ij}$ defined by
% \begin{equation}\label{Bach tensor}
% B_{ij}=\frac{1}{n-3}\nabla^k\nabla^lW_{ijkl}+\frac{1}{n-2}R^{kl}W_{ikjl},
% \end{equation}
% and $W_{ikjl}$ stands the well known Weyl curvature tensor. Let us observe that in dimension four the Bach tensor is trace-free and divergence-free. 

\section{Results for a general tensor} \label{Q Section}

In the following section, we will find various conditions to force flatness. We will frequently use the following fact to do so.
\begin{fact} For a trace-free tensor $q$, a stationary $q$-soliton will always be steady and $q$-flat. \end{fact}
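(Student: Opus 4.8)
The plan is to unwind the two definitions at play. Recall that a stationary soliton means either that $X$ is a Killing field, so that $\mathcal{L}_X g = 0$, or, in the gradient case, that the potential $f$ is constant, so that $\nabla f \equiv 0$ and hence $\mathrm{Hess}\, f = 0$. Either way, the left-hand side of the defining relation vanishes: substituting into the soliton equation \eqref{q soliton} (respectively \eqref{grad q soliton}) leaves us with the pointwise identity
\begin{equation*}
0 = \lambda g + \tfrac{1}{2} q, \qquad \text{i.e.} \qquad q = -2\lambda g.
\end{equation*}

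From here the argument is purely algebraic. The idea is to take the $g$-trace of both sides and exploit that $q$ is trace-free. Since $\operatorname{tr}_g g = g^{ij} g_{ij} = n$ on an $n$-dimensional manifold, tracing $q = -2\lambda g$ gives $0 = \operatorname{tr}_g q = -2\lambda n$. As $n \geq 1$, this forces $\lambda = 0$, which by definition is precisely the statement that the soliton is steady. Feeding $\lambda = 0$ back into $q = -2\lambda g$ yields $q \equiv 0$, which is the $q$-flat conclusion.

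There is really no obstacle to speak of here; the only point requiring any care is making sure both formulations of "stationary" (Killing field versus constant potential) are covered, but both reduce the left-hand side to zero and thus feed into the same one-line trace computation. This is why the statement is labelled a \emph{Quick Fact}: the trace-free hypothesis does all the work, and no curvature or completeness assumptions are needed.
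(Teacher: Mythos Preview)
Your proof is correct and follows essentially the same route as the paper's: use the stationary hypothesis to make the left-hand side of the soliton equation vanish, trace the resulting identity $0=\lambda g+\tfrac{1}{2}q$ using $\operatorname{tr}_g q=0$ to force $\lambda=0$, and substitute back to obtain $q\equiv 0$. The only cosmetic difference is that you handle both the Killing and constant-potential formulations of ``stationary'' together at the outset, whereas the paper treats the Killing case in the main proof and relegates the gradient case to a parenthetical remark afterward.
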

\begin{proof}
Suppose $q$ is a trace-free tensor and $(M, g, X)$ is a stationary soliton. By definition, $X$ is Killing. Applying this to \eqref{q soliton}, we see that $0 = \lambda g + \frac{1}{2}q$. Taking the trace of both sides, we see that $0=\lambda n$, so $\lambda =  0$ and the soliton is steady. Consequently, $q=0$ and $M$ is $q$-flat. 
\end{proof}
\noindent (Note that this also holds in the gradient case because $f$ being constant forces $\hess f = 0$. )

Because this paper exclusively considers the case where $q$ is trace-free tensor, we need only show that $X$ is Killing to force the soliton to be $q$-flat. However, in some instance we are able to show the stronger condition that $X$ is parallel. That is, $\nabla X=0$. It is quick to see the a parallel field is always Killing.

The following lemma will also prove to be very useful in our study of solitons. This extension of the Bochner formula can be found in \cite[Lemma 2.1]{PW} and was originally shown in \cite{Poor}.

\begin{lemma}\label{lemma01}
Let $q$ be a symmetric two tensor and $(M,g,X)$ a $q$-soliton \eqref{q soliton}. Then the following holds:
\begin{equation}\label{formula01}
\frac{1}{2}\Delta|X|^2-|\nabla X|^2+{\rm Ric}(X,X)={\rm div}(q)(X)-\frac{1}{2}\nabla_X{\rm tr}(q).
\end{equation}
\end{lemma}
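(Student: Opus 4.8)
The plan is to reduce the identity to the classical Bochner formula combined with the soliton equation \eqref{q soliton}. First I would record the elementary pointwise identity, valid for any vector field, that
\[
\frac{1}{2}\Delta|X|^2 = |\nabla X|^2 + \langle \Delta X, X\rangle,
\]
where $\Delta X$ denotes the rough (connection) Laplacian of the one-form associated to $X$. This follows by differentiating $|X|^2=g(X,X)$ twice and is precisely where both the $|\nabla X|^2$ term and the ``Laplacian of $X$'' term originate. With this in hand, the whole problem reduces to identifying $\langle \Delta X, X\rangle$ with the right-hand side of \eqref{formula01} minus the Ricci term.

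To compute $\Delta X$ I would take the divergence of the soliton equation written in components as $\nabla_i X_j + \nabla_j X_i = 2\lambda g_{ij} + q_{ij}$. Contracting with $g^{jk}\nabla_k$ gives $\nabla^j\nabla_i X_j + \Delta X_i = {\rm div}(q)_i$, since $\lambda$ is constant and $g$ is parallel. The first term is then rewritten using the contracted commutation (Ricci) identity $\nabla^j\nabla_i X_j = \nabla_i({\rm div}\,X) + {\rm Ric}_{ij}X^j$, which is exactly where the curvature enters the formula.

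Next I would handle $\nabla_i({\rm div}\,X)$ via the trace of the soliton equation: tracing $\nabla_iX_j+\nabla_jX_i = 2\lambda g_{ij}+q_{ij}$ yields $2\,{\rm div}\,X = 2\lambda n + {\rm tr}(q)$, so that ${\rm div}\,X = \lambda n + \tfrac12{\rm tr}(q)$; after differentiating, the constant $\lambda n$ drops out, leaving $\nabla_i({\rm div}\,X)=\tfrac12\nabla_i{\rm tr}(q)$. Substituting these back produces a pointwise expression for $\Delta X_i$, and contracting it with $X^i$ and inserting into the Bochner identity above gives \eqref{formula01} after moving the ${\rm Ric}(X,X)$ term to the left.

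The computation is essentially bookkeeping, so the main point to watch is consistency of sign and index conventions in the commutation identity for one-forms, since an error there propagates directly into the sign of the ${\rm Ric}(X,X)$ term. I would therefore fix a convention for $R$ and ${\rm Ric}$ at the outset and verify $\nabla^j\nabla_iX_j=\nabla_i({\rm div}\,X)+{\rm Ric}_{ij}X^j$ against it before proceeding. Because \eqref{formula01} is a purely local tensorial identity, no completeness or growth hypotheses on $(M,g,X)$ are needed, so there is no analytic obstacle here, only the algebraic care just described.
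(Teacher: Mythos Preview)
Your proposal is correct and follows essentially the same route as the paper: both arguments take the divergence and the trace of the soliton equation and combine them with the Bochner-type identity relating $\tfrac12\Delta|X|^2$, $|\nabla X|^2$, and ${\rm Ric}(X,X)$. The only cosmetic difference is that the paper quotes this identity in the packaged form ${\rm div}(\mathcal{L}_Xg)(X)=\tfrac12\Delta|X|^2-|\nabla X|^2+{\rm Ric}(X,X)+\nabla_X{\rm div}(X)$ from \cite{PW,Poor}, whereas you derive it in situ from the rough-Laplacian expansion $\tfrac12\Delta|X|^2=|\nabla X|^2+\langle\Delta X,X\rangle$ together with the Ricci commutation identity.
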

\begin{proof}
From Bochner's formula in \cite{PW}, we known
$${\rm div}(\mathcal{L}_Xg)(X)=\frac{1}{2}\Delta|X|^2-|\nabla X|^2+{\rm Ric}(X,X)+\nabla_X{\rm div}(X).$$
On the other hand, taking the divergence and  trace in \eqref{q soliton} we have
$${\rm div}(\mathcal{L}_Xg)(X)={\rm div}(q)(X)\,\,\,{\rm and}\,\,\,{\rm div}X=\lambda n+\frac{1}{2}{\rm tr}(q).$$
Substituting this in Bochner's formula we conclude the proof.
\end{proof}

\subsection{Flatness via non-positive Ricci curvature}
Recall, a function, $f$, is {\em subharmonic} if $\Delta f \geq 0$. Further, a Riemannian manifold $M$ is {\em parabolic} if the unique subharmonic functions on $M$ which are bounded from above are the  constant functions. That is, if $u \in C^{\infty}(M)$ with $\Delta u \geq 0$ and $\sup_M u < +\infty$, then $u$ is constant. For more details see \cite{Grigoryan}.

In order to state our next results, let us recall a very useful auxiliary lemma due to Yau and corresponds to \cite[Theorem 3]{Yau:76}.

\begin{lemma}[Yau]\label{lemma Yau}
Let $u$ be a non-negative smooth subharmonic function on a complete Riemannian manifold $M^{n}$. If $u\in L^p(M)$, for some $p > 1$, then $u$ is constant.
\end{lemma}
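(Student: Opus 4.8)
The plan is to run a cutoff integration-by-parts argument (a Caccioppoli-type estimate) that plays the subharmonicity of $u$ against the global $L^p$ bound. Since $M$ is complete, for each $R>0$ I can fix a Lipschitz cutoff $\phi=\phi_R$ with $\phi\equiv 1$ on the geodesic ball $B_R(x_0)$, $\operatorname{supp}\phi\subset B_{2R}(x_0)$, $0\le\phi\le 1$, and $|\nabla\phi|\le C/R$ for a constant $C$ independent of $R$; completeness guarantees such a compactly supported $\phi$ exists. The test object I would use is the non-negative function $\phi^2 u^{p-1}$, so that subharmonicity gives the starting inequality $\int_M \phi^2 u^{p-1}\,\Delta u\ge 0$.

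Next I would integrate by parts. Because $\phi$ has compact support there are no boundary terms, and expanding $\nabla(\phi^2 u^{p-1})$ yields
\begin{equation*}
0\le \int_M \phi^2 u^{p-1}\Delta u = -2\int_M \phi\,u^{p-1}\langle\nabla\phi,\nabla u\rangle - (p-1)\int_M \phi^2 u^{p-2}|\nabla u|^2 .
\end{equation*}
Rearranging puts the good (non-negative) weighted Dirichlet term on the left with coefficient $p-1>0$ — this is exactly where the hypothesis $p>1$ is used — while the cross term I would control by Young's inequality, writing $u^{p-1}=u^{(p-2)/2}\cdot u^{p/2}$ and splitting the integrand as $\bigl(\phi u^{(p-2)/2}|\nabla u|\bigr)\bigl(u^{p/2}|\nabla\phi|\bigr)$. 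Absorbing the first factor into the left-hand side leaves
\begin{equation*}
\int_M \phi^2 u^{p-2}|\nabla u|^2 \le \frac{C'}{R^2}\int_{B_{2R}(x_0)} u^p \le \frac{C'}{R^2}\int_M u^p ,
\end{equation*}
using $|\nabla\phi|\le C/R$.

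Finally I would let $R\to\infty$. Since $u\in L^p(M)$ the right-hand side tends to $0$, forcing $\int_M u^{p-2}|\nabla u|^2=0$. Because $|\nabla(u^{p/2})|^2=\tfrac{p^2}{4}u^{p-2}|\nabla u|^2$, this means $\nabla(u^{p/2})\equiv 0$, so $u^{p/2}$, and hence $u$, is constant on the connected manifold $M$, which is the claim.

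I expect the main obstacle to be the absorption step rather than the global structure: when $1<p<2$ the weight $u^{p-2}$ degenerates at the zeros of $u$, so one must first verify that $\int_M \phi^2 u^{p-2}|\nabla u|^2$ is finite before absorbing it (otherwise the inequality is vacuous). The standard remedy is to replace $u$ by $u_\varepsilon=u+\varepsilon$, which is still subharmonic and has $u_\varepsilon^{p-2}$ bounded on compacta, carry out the identical estimate, and only then send $\varepsilon\to 0$ by monotone/dominated convergence. One should also record the routine points that $\phi^2 u^{p-1}$ is an admissible test function and that the cutoffs can be chosen with the stated gradient bound, both of which follow from the completeness of $M$.
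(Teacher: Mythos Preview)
The paper does not actually prove this lemma; it is recorded only as a citation of \cite[Theorem~3]{Yau:76} and used as a black box in the subsequent arguments. Your proposal reproduces the classical proof (essentially Yau's own): the Caccioppoli-type estimate with test function $\phi^2 u^{p-1}$, absorption of the cross term via Young's inequality using $p-1>0$, and passage to the limit $R\to\infty$ against the global $L^p$ bound. The argument is correct, and your remark about regularizing with $u_\varepsilon=u+\varepsilon$ when $1<p<2$ is exactly the standard fix for the degeneracy of $u^{p-2}$ at the zero set; once you know $\int_M u^{p-2}|\nabla u|^2=0$ you may also conclude directly that $\nabla u=0$ on $\{u>0\}$ and, since a non-negative smooth function has vanishing gradient at its zeros, $\nabla u\equiv 0$ on $M$.
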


\noindent Here we use the notation $L^p(M) = \{u : M \rightarrow \mathbb{R} \ ; \ \int_{M}|u|^pdM < +\infty\}$ for each $p \geq 1$.

With these conventions in place, we are ready to state out first result in the non-compact case.

% \begin{theorem}\label{thm01} Let $(M,g,X)$ be an $n$-dimensional complete, non-compact soliton to the $q$-flow such that $q$ is a divergence-free, trace-free tensor and let $(M,g,X)$ have non-positive Ricci curvature. If any one of the following conditions are met, then $(M,g)$ is stationary and $q$-flat.
% \begin{enumerate}
% \item \begin{equation}\label{integral condition}
% \int_{M\diagdown B_r(x_0)}d(x,x_0)^{-2}|X|^2<\infty\,\,\,{\color{blue}{\text{and}}\,\,\,{\rm Ric}\geq-(n-1)\frac{k^2}{1+r^2}},
% \end{equation}
% {\color{blue}in the sense of quadratic forms}
% \item If $M$ is parabolic and $|X|\in L^\infty(M)$ ,
% \item If $|X|\in L^p(M)$ for some $p>1$.
% \end{enumerate}
% \end{theorem}

\begin{theorem}\label{thm012} Let $(M,g,X)$ be an $n$-dimensional complete, non-compact soliton to the $q$-flow for a divergence-free, trace-free tensor $q$. Further, suppose $(M,g,X)$ has non-positive Ricci curvature. If $M$ is parabolic and $|X|\in L^\infty(M)$ or if $|X|\in L^p(M)$ for $p>1$, then $(M,g)$ is stationary and $q$-flat.
\end{theorem}

\begin{proof} Proceeding by enumerating each of the suppositions:
Suppose $M$ is parabolic and $|X|\in L^\infty(M)$. As above, we use Lemma \ref{lemma01} together our assumptions to see that
$$\frac{1}{2}\Delta|X|^2=|\nabla X|^2-{\rm Ric}(X,X)\geq0.$$
Hence, $|X|^2$ is a subharmonic function. Since $M$ is parabolic, we get $|X|^2$ is a constant on $M$. Therefore
\begin{equation*}
|\nabla X|^2-{\rm Ric}(X,X)=0,
\end{equation*}
i.e.,  $\nabla X=0$ and $X$ is parallel. Therefore $M$ is stationary and $q$-flat.

Finally, suppose $|X|\in L^p(M)$ for $p>1$.  We again use Lemma \ref{lemma01} to conclude that $|X|^2$ is a subharmonic function on $M$. Thus, by Lemma \ref{lemma Yau}, $|X|^2$ is constant on $M$. Thus $X$ is Killing. Therefore, $M$ is stationary and $q$-flat, proving the theorem.
\end{proof}

Since the result holds in the compact case (as shown in \cite[Lemma 3.6]{Griffin}) under these conditions, Theorem \ref{thm012} and \cite[Lemma 3.6]{Griffin} together show Theorem \ref{thm Intro 01}.

\begin{theorem}\label{thm011} Let $(M,g,X)$ be an $n$-dimensional complete, non-compact soliton to the $q$-flow for a divergence-free, trace-free tensor $q$. Further, suppose the Ricci curvature of $(M,g,X)$ satisfies: 
\[-(n-1)\frac{k^2}{1+r^2} \leq \ric \leq 0\]
in the sense of quadratic forms. 
If $\int_{M\diagdown B_r(x_0)}d(x,x_0)^{-2}|X|^2<\infty$, then $(M,g)$ is stationary and $q$-flat.
\end{theorem}

\begin{proof}
Suppose the above conditions hold. Since $q$ is trace-free and divergence-free, Lemma \ref{lemma01} forces
\begin{equation}\label{eqlemma01}
\frac{1}{2}\Delta|X|^2=|\nabla X|^2-{\rm Ric}(X,X).
\end{equation}
Following the proof of \cite[Theorem 0.1]{WFR}, we consider the cut-off function $\phi_r\in C_0^\infty(B(x_0,2r))$ for $r>0$, such that
\begin{eqnarray}\label{prob 0}\left\{ \begin{array}{lllll}
0\leq\phi_r\leq1 & {\rm in}\,\,B(x_0,2r)\\
\,\,\,\,\,\phi_r=1 & {\rm in}\,\, B(x_0,r)\\
\,\,\,\,\,|\nabla\phi_r|^2\leq\dfrac{C}{r^2} & {\rm in}\,\,B(x_0,2r)\\
\,\,\,\,\,\Delta\phi_r\leq\dfrac{C}{r^2} & {\rm in}\,\,B(x_0,2r),\\
\end{array}\right.
\end{eqnarray}
where $C>0$ is a constant. We know such a cut-off function exists from Corollary 2.3 of \cite{Setti}. (See also \cite[Corollary 4.3]{Pigola}.)

Fixing the cut-off function $\phi_r$ as above and integrating \eqref{eqlemma01}, we have that
$$\int|\nabla X|^2\phi_r^2-\int{\rm Ric}(X,X)\phi_r^2=\frac{1}{2}\int\Delta|X|^2\phi_r^2.$$
Now assuming (1) and integrating by parts we obtain that
$$\frac{1}{2}\int_{B_{2r}}\Delta|X|^2\phi_r^2=\frac{1}{2}\int_{B_{2r}}(\Delta\phi_r^2)|X|^2\leq\frac{1}{2}\int_{{B_{2r}}\diagdown B_r}\frac{C}{r^2}|X|^2\to0,$$
when $r\to\infty$.
Hence
$$\int|\nabla X|^2-{\rm Ric}(X,X)=0.$$
Since the Ricci curvature is non-positive we have that $\nabla X=0$. In particular $X$ is a parallel vector field and, by tracing \eqref{q soliton}, we get $\lambda = 0$. Therefore, $M$ is stationary and $q$-flat.
\end{proof}

\subsection{Flatness via convergence to zero at infinity}
Let $(M, g)$ be a complete non-compact Riemannian manifold and let $d(\,\cdot\,,x_0):M\rightarrow[0,+\infty)$ denote the Riemannian distance of $M$, measured from a fixed point $x_0\in M$. According to \cite{Alias}, we say that a continuous function $u\in C^0(M)$ {\em converges to zero at infinity}, when it satisfies the following condition
\begin{equation*}\label{distance condition}
\lim_{d(x,x_0)\rightarrow+\infty}u(x)=0.
\end{equation*}
Using this terminology, we obtain the following result for gradient solitons.

\begin{theorem}\label{thm02}
Let $(M,g,f)$ be a complete gradient soliton to the $q$-flow, such that $q$ is a divergence-free, trace-free tensor. If $|\nabla f|$ converges to zero at infinity, then $M$ must be stationary and $q$-flat.
\end{theorem}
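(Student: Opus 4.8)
The plan is to run the gradient analogue of the argument used for Theorem \ref{thm01}, but with the non-positive Ricci hypothesis replaced by the observation that the divergence-free condition by itself annihilates the offending curvature term. Once $|\nabla f|^2$ is shown to be subharmonic, I would feed it into a maximum principle at infinity.

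First I would set $X=\nabla f$, so that $\nabla X={\rm Hess}f$ and $|\nabla X|^2=|{\rm Hess}f|^2$. Tracing the gradient soliton equation \eqref{grad q soliton} and using that $q$ is trace-free gives $\Delta f=\lambda n$, a constant. Since $q$ is both trace-free and divergence-free, Lemma \ref{lemma01} collapses, exactly as in \eqref{eqlemma01}, to
\begin{equation*}
\tfrac{1}{2}\Delta|\nabla f|^2=|{\rm Hess}f|^2-{\rm Ric}(\nabla f,\nabla f).
\end{equation*}
The key step is to show that the right-hand side is non-negative \emph{without} any curvature assumption. For this I would take the divergence of \eqref{grad q soliton}: the commutation identity gives ${\rm div}({\rm Hess}f)=d(\Delta f)+{\rm Ric}(\nabla f,\cdot)$, while ${\rm div}(\lambda g)=0$ and ${\rm div}(q)=0$. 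Because $\Delta f$ is constant, $d(\Delta f)=0$, leaving ${\rm Ric}(\nabla f,\cdot)=0$ and in particular ${\rm Ric}(\nabla f,\nabla f)=0$. Substituting back, $\tfrac{1}{2}\Delta|\nabla f|^2=|{\rm Hess}f|^2\ge 0$, so $u:=|\nabla f|^2$ is a non-negative subharmonic function on $M$.

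Finally I would invoke a maximum principle at infinity in the spirit of \cite{Alias}. Suppose $\sup_M u=m>0$. Since $u$ converges to zero at infinity, all values near $m$ are attained inside a fixed compact set, so the supremum is realized at an interior point; the strong (Hopf) maximum principle for subharmonic functions then forces $u\equiv m$ on the connected manifold $M$, contradicting $u\to 0$ at infinity. Hence $m=0$ and $u\equiv 0$, i.e.\ $\nabla f\equiv 0$ and $f$ is constant, so $M$ is stationary. The Quick Fact then immediately yields that the soliton is steady and $q$-flat.

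I expect the main obstacle to be the middle step: with no sign condition on the Ricci curvature, \eqref{eqlemma01} is not manifestly subharmonic, and the whole argument hinges on recognizing that, for a gradient soliton, the divergence-free hypothesis forces ${\rm Ric}(\nabla f,\cdot)=0$ and thereby removes the curvature term. A secondary technical point is justifying the maximum principle for a function only assumed to vanish at infinity on a general complete (possibly non-parabolic) manifold; this is handled by the purely local Hopf maximum principle together with the fact that the vanishing-at-infinity condition confines any positive supremum to a compact region.
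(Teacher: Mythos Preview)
Your proof is correct and follows essentially the same line as the paper's: both establish $\ric(\nabla f,\cdot)=0$ by taking the divergence of \eqref{grad q soliton} (the paper cites this as \cite[Corollary~3.2]{Griffin}), deduce that $|\nabla f|^2$ is subharmonic, and finish with a maximum principle at infinity. The only difference is in that last step: the paper invokes the vector-field maximum principle of Al\'{\i}as--Caminha--do Nascimento (Lemma~\ref{lemma:pm at infinity}) applied to $Y=\nabla|\nabla f|^2$, whereas your argument---the vanishing-at-infinity hypothesis forces any positive supremum to be attained on a compact set, and then the classical Hopf strong maximum principle gives the contradiction---is more elementary and entirely self-contained.
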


Observe that by limiting our scope to the gradient case we avoid putting conditions on the curvature of the manifold.

To prove this result we recall of the following lemma which is a maximum principle at infinity and corresponds to item $(a)$ of \cite[Theorem $2.2$]{Alias}.
 
\begin{lemma}[Al\'ias, et.\,al.]\label{lemma:pm at infinity} 
Let $(M,g)$ be a complete non-compact Riemannian manifold and let $X\in\mathfrak{X}(M)$ be a smooth vector field on $M$. Assume that there exists a nonnegative, non-identically vanishing function $u\in C^{\infty}(M)$ which converges to zero at infinity and such that $g(\nabla u,X)\geq0$. If ${\rm div}X\geq0$ on $M$, then $g(\nabla u,X)\equiv0$ on $M$.
\end{lemma}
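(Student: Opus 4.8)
The plan is to exploit the divergence structure of the quantity in question. Writing everything in terms of the vector field $uX$, one has the pointwise identity
\begin{equation*}
{\rm div}(uX) = g(\nabla u, X) + u\,{\rm div}(X).
\end{equation*}
Under the hypotheses both summands on the right are nonnegative: $g(\nabla u, X)\geq 0$ by assumption, and $u\,{\rm div}(X)\geq 0$ since $u\geq 0$ and ${\rm div}(X)\geq 0$. Hence ${\rm div}(uX)\geq 0$ everywhere, and it suffices to show that this divergence integrates to zero over a family of domains exhausting $\{u>0\}$, because then each nonnegative summand must vanish there.

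To carry this out I would integrate over the superlevel sets $\Omega_t = \{x\in M : u(x) > t\}$. Since $u$ is nonnegative and not identically zero, $\sup_M u > 0$, so $\Omega_t \neq \emptyset$ for $0 < t < \sup_M u$. The crucial point is that $u$ converges to zero at infinity: given such a $t$, there is $R>0$ with $u < t$ outside the ball $B_R(x_0)$, whence $\Omega_t \subset B_R(x_0)$ is relatively compact. By Sard's theorem almost every $t\in(0,\sup_M u)$ is a regular value, so for such $t$ the boundary $\partial\Omega_t = \{u=t\}$ is a smooth compact hypersurface with outward unit normal $\nu = -\nabla u/|\nabla u|$. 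The divergence theorem then gives
\begin{equation*}
\int_{\Omega_t}{\rm div}(uX) = \int_{\partial\Omega_t} u\, g(X,\nu) = t\int_{\partial\Omega_t} g(X,\nu),
\end{equation*}
where I used $u\equiv t$ on $\partial\Omega_t$. On this level set $g(X,\nu) = -g(\nabla u,X)/|\nabla u|\leq 0$, so the right-hand side is $\leq 0$, while the left-hand side is $\geq 0$ because the integrand is nonnegative.

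Sandwiching these inequalities forces $\int_{\Omega_t}{\rm div}(uX) = 0$ for every regular value $t$. As a nonnegative continuous function with zero integral, ${\rm div}(uX)$ vanishes identically on $\Omega_t$, and in particular $g(\nabla u, X)\equiv 0$ on $\Omega_t$. Letting $t\downarrow 0$ through regular values (which are dense in $(0,\sup_M u)$) exhausts $\{u>0\}$, giving $g(\nabla u,X)\equiv 0$ there. On the complementary set $\{u=0\}$ every point is a global minimum of the smooth nonnegative function $u$, so $\nabla u = 0$ and the expression vanishes trivially. Combining the two regions yields $g(\nabla u,X)\equiv 0$ on all of $M$.

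The main obstacle --- and the reason the hypothesis that $u$ converges to zero at infinity cannot be dropped --- is justifying the divergence theorem without any global integrability assumption on $X$: this condition is exactly what makes the superlevel sets $\Omega_t$ relatively compact, converting potentially delicate behavior at infinity into a genuinely compact computation. The only other technical care needed is Sard's theorem to secure smooth level sets and the routine density argument letting $t\to 0^+$.
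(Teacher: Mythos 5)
Your proof is correct, and it is essentially the original argument: the paper under review states this lemma without proof, citing item (a) of \cite[Theorem 2.2]{Alias}, whose proof likewise exploits that convergence of $u$ to zero at infinity makes the superlevel sets $\Omega_t=\{u>t\}$ relatively compact, applies the divergence theorem to $uX$ over $\Omega_t$ for regular values $t$ supplied by Sard's theorem, and then lets $t\downarrow 0$, handling $\{u=0\}$ via $\nabla u=0$ at global minima. The only cosmetic difference is the bookkeeping of the boundary term: the source converts $t\int_{\partial\Omega_t}g(X,\nu)$ back into $t\int_{\Omega_t}\operatorname{div}X$ by a second application of the divergence theorem and compares with $\int_{\Omega_t}\bigl(u\operatorname{div}X+g(\nabla u,X)\bigr)$, whereas you read off the sign of $g(X,\nu)=-g(\nabla u,X)/|\nabla u|$ pointwise on the level set; both close the same sandwich and force each nonnegative summand of $\operatorname{div}(uX)$ to vanish on $\Omega_t$.
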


Now, we are in position to prove Theorem \ref{thm02}.

\begin{proof}[Proof of Theorem \ref{thm02}]
Proceeding by contradiction, suppose $M$ is not stationary. That is, suppose the potential function $f$ is not constant. In this case, we can consider the function $u:=|\nabla f|^2$, which is nonnegative, non-identically vanishing and converges to zero at infinity. Now let us consider the smooth vector field $X:=\nabla|\nabla f|^2$ on $M$.

For this vector field we have
$$g(\nabla u,X)=|\nabla|\nabla f|^2|^2\geq0.$$
On the other hand, since $q$ is trace-free and divergence-free by \cite[Corollary 3.2]{Griffin} we see that $\ric(\nabla f, \nabla f) \geq 0$  (in fact, $\ric(\nabla f, \nabla f) = 0$).  Thus, by Bochner's formula
\[\frac{1}{2}{\rm div}X=\frac{1}{2}\Delta|\nabla f|^2=|{\rm Hess}f|^2\geq0.\]
Hence, by Lemma \ref{lemma:pm at infinity} we get $u=|\nabla f|^2$ is a constant. Since it converges to zero at infinity we can conclude that $u\equiv0$ on $M$, contradicting our initial assertions about $f$. Therefore, $M$ must be stationary and $q$-flat.
\end{proof}

Proceeding to examine the non-gradient case, we get the following result which now requires a sign condition on Ricci curvature.

\begin{theorem}\label{thm03}
Let $(M,g,X)$ be a complete soliton to the $q$-flow for a divergence-free, trace-free tensor $q$ whose  Ricci curvature is non-positive. If $|X|$ converges to zero at infinity, then $M$ must be stationary and $q$-flat.
\end{theorem}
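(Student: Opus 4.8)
The plan is to run the argument for Theorem~\ref{thm02} essentially verbatim, but with the general field $X$ in place of the gradient field $\nabla f$, and to compensate for the loss of the automatic identity $\ric(\nabla f,\nabla f)=0$ (which was supplied by \cite[Corollary 3.2]{Griffin} in the gradient case) by invoking the non-positive Ricci hypothesis. First I would feed the assumptions on $q$ into Lemma~\ref{lemma01}: since $q$ is divergence-free and trace-free, the right-hand side of \eqref{formula01} vanishes, leaving
\begin{equation*}
\frac{1}{2}\Delta|X|^2 = |\nabla X|^2 - \ric(X,X).
\end{equation*}
Because the Ricci curvature is non-positive, $-\ric(X,X)\geq 0$, so the whole right-hand side is non-negative and $|X|^2$ is subharmonic on $M$. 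This is precisely where the curvature condition earns its place: in the gradient setting of Theorem~\ref{thm02} subharmonicity came for free from $\ric(\nabla f,\nabla f)=0$, whereas here there is no such identity and the sign of Ricci must be imposed directly.

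Next I would set up the maximum principle at infinity of Lemma~\ref{lemma:pm at infinity}. Arguing by contradiction, suppose $M$ is not stationary; since the zero field is Killing, non-stationarity forces $X\not\equiv 0$, so that $u:=|X|^2$ is a nonnegative, non-identically vanishing smooth function which converges to zero at infinity by hypothesis. Taking the smooth vector field $Y:=\nabla|X|^2$ (playing the role of the field $X$ in Lemma~\ref{lemma:pm at infinity}), one checks the two structural hypotheses: $g(\nabla u,Y)=|\nabla|X|^2|^2\geq 0$, and ${\rm div}(Y)=\Delta|X|^2=2\bigl(|\nabla X|^2-\ric(X,X)\bigr)\geq 0$ by the subharmonicity just established.

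Applying Lemma~\ref{lemma:pm at infinity} then yields $g(\nabla u,Y)=|\nabla|X|^2|^2\equiv 0$ on $M$, so $u=|X|^2$ is constant; combined with its convergence to zero at infinity, this gives $u\equiv 0$, i.e. $X\equiv 0$, contradicting $X\not\equiv 0$. Hence $M$ must be stationary, and the opening Quick Fact immediately upgrades stationarity to $q$-flatness. I expect no serious obstacle here: the argument is a faithful transcription of the proof of Theorem~\ref{thm02} once the test pair $(u,Y)=(|X|^2,\nabla|X|^2)$ is identified. The only genuine subtlety---and the structural reason the hypotheses of this theorem differ from those of Theorem~\ref{thm02}---is the verification that ${\rm div}(Y)\geq 0$, which in the non-gradient case is available only through the non-positive Ricci assumption that replaces the gradient identity used previously.
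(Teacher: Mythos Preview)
Your proposal is correct and follows essentially the same route as the paper's own proof: both argue by contradiction, set $u=|X|^2$ and $Y=\nabla|X|^2$, use Lemma~\ref{lemma01} together with the non-positive Ricci hypothesis to verify ${\rm div}(Y)\geq 0$, apply Lemma~\ref{lemma:pm at infinity} to force $|X|^2$ constant, and then conclude $X\equiv 0$ from convergence to zero at infinity. Your commentary on why the Ricci sign condition is needed here (in contrast to Theorem~\ref{thm02}) is accurate and matches the structural role it plays in the paper.
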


\begin{proof}
Proceeding by contradiction, suppose that $M$ is not stationary. That is, suppose $X$ is not Killing (so it cannot identically vanish). Consider the function $v:=|X|^2$ and the smooth vector field $Y:=\nabla|X|^2$. It is not difficult to see that $v$ is nonnegative, non-identically vanishing smooth function which converges to zero at infinity, and that the vector field $Y$ satisfies
$$g(\nabla v,Y)=|\nabla|X|^2|^2\geq0.$$
Since $q$ is trace-free and divergence-free we have from Lemma \ref{lemma01} that
\begin{equation*}\label{eqlemma02}
\frac{1}{2}{\rm div}Y=\frac{1}{2}\Delta|X|^2=|\nabla X|^2-{\rm Ric}(X,X)\geq0.
\end{equation*}
Hence, we are able to apply Lemma \ref{lemma:pm at infinity} to conclude that
$$0\equiv g(\nabla v,Y)=|\nabla|X|^2|^2,$$
that is, $v=|X|^2$ is a constant on $M$. Since $v$ converges to zero at infinity, we obtain $v\equiv0$ on $M$, reaching at a contradiction. Therefore, $M$ is stationary and, since $q$ is trace-free, we have from structural equation that $\lambda=0$ and  $(M,g)$ must be $q$-flat.
\end{proof}

\subsection{Flatness via polynomial volume growth}
Let $(M,g)$ be a connected, oriented, complete non-compact Riemannian manifold. Here we denote the geodesic ball centered at $p$ and with radius $r$ by $B(p,r)$. Considering a polynomial function $\eta:(0,+\infty)\rightarrow(0,+\infty)$, we say that $(M,g)$ has {\em polynomial volume growth} like $\eta(r)$ if there exists $p\in M$ such that
$${\rm vol}(B(p,r))= O(\eta(r)),$$
as $r\rightarrow+\infty$, where ${\rm vol}$ denotes the volume related to the metric $g$. It was observed in \cite[Section $2$]{Caminha} that we always have
\[\dfrac{{\rm vol}(B(p,r))}{\eta(r)}\geq\dfrac{{\rm vol}(B(q,r-d))}{\eta(r-d)}.\dfrac{\eta(r-d)}{\eta(r)}\]
for $p,q\in M$ at distance $d$ from each other. Consequently, the choice of $p$ does not matter for volume growth so we will just say that $(M,g)$ has polynomial volume growth.

Keeping this in mind, in the next result we obtain $q$-flat metrics by assuming some regularity for the gradient and Hessian of the potential function $f$, via a maximum principle related to polynomial volume growth due Al\'ias et al. \cite[Theorem 2.1.]{Caminha}.

\begin{theorem}\label{thm04}
Let $(M,g,X)$ be a complete non-compact $q$-soliton for some trace-free and divergence-free tensor $q$, whose Ricci curvature satisfies ${\rm Ric}\leq-\alpha g$, for some positive constant $\alpha$. If $(M,g)$ has polynomial volume growth and $|X|,|\nabla X|\in L^\infty(M)$, then $M$ must be stationary and $q$-flat.
\end{theorem}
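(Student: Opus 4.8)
The plan is to combine the Bochner-type identity of Lemma~\ref{lemma01} with the strict negativity of the Ricci bound, and then to feed the resulting differential inequality into the volume-growth maximum principle \cite[Theorem 2.1]{Caminha}. First I would note that, since $q$ is trace-free and divergence-free, Lemma~\ref{lemma01} collapses to $\frac{1}{2}\Delta|X|^2=|\nabla X|^2-\ric(X,X)$. Invoking the hypothesis $\ric\leq-\alpha g$ with $\alpha>0$ then upgrades this to the \emph{strict} subharmonicity estimate $\frac{1}{2}\Delta|X|^2\geq|\nabla X|^2+\alpha|X|^2\geq 0$. The presence of the genuinely positive term $\alpha|X|^2$, rather than merely $\Delta|X|^2\ge 0$, is what will ultimately drive the argument all the way to $X\equiv 0$.

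Next I would introduce the vector field $Y:=\nabla|X|^2$, so that $\operatorname{div}Y=\Delta|X|^2\ge 0$ has a fixed sign on $M$. The two $L^\infty$ hypotheses enter precisely here: by Cauchy--Schwarz, $\bigl|\nabla|X|^2\bigr|\le 2|X|\,|\nabla X|\le 2\|X\|_{L^\infty}\|\nabla X\|_{L^\infty}$, so $Y$ is a bounded field with one-signed divergence on a manifold of polynomial volume growth. This is exactly the configuration handled by \cite[Theorem 2.1]{Caminha}, which I would apply to conclude that $\operatorname{div}Y\equiv 0$, that is, $\Delta|X|^2\equiv 0$ on $M$.

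Finally I would return to the identity from the first step. With $\Delta|X|^2\equiv 0$ we obtain $|\nabla X|^2=\ric(X,X)\le-\alpha|X|^2\le 0$; since $|\nabla X|^2\ge 0$, this forces simultaneously $\nabla X\equiv 0$ and $|X|\equiv 0$, so in fact $X\equiv 0$. In particular $X$ is Killing, hence $M$ is stationary, and by the Quick Fact at the start of this section a stationary soliton for a trace-free $q$ is automatically steady ($\lambda=0$) and $q$-flat, completing the proof.

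The step I expect to be the main obstacle is the application of \cite[Theorem 2.1]{Caminha}: one must verify that the boundedness of $|Y|$ together with the polynomial volume growth genuinely meets the integrability/growth hypotheses of that maximum principle. This is the delicate point separating Theorem~\ref{thm04} from the earlier cases, where parabolicity (Theorem~\ref{thm01}, condition (2)) or an $L^p$ bound (Theorem~\ref{thm01}, condition (3), via Yau's Lemma~\ref{lemma Yau}) did the analytic work; here it is the interplay between the bound $|Y|\le 2|X|\,|\nabla X|$ and the sub-exponential nature of polynomial volume growth that must be arranged so that the boundary flux $\int_{\partial B_r}\langle Y,\nu\rangle$ can be driven to zero along a suitable sequence $r\to\infty$. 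A secondary point worth confirming is that the \emph{strict} bound $\ric\le-\alpha g$, and not merely $\ric\le 0$, is truly needed: the concluding step shows it is exactly the surviving term $\alpha|X|^2$ that promotes $\nabla X\equiv 0$ to $X\equiv 0$.
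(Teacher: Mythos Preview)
Your overall strategy mirrors the paper's: set $u=|X|^2$, $Y=\nabla|X|^2$, use Lemma~\ref{lemma01} and the Ricci bound to get a differential inequality, bound $|Y|$ via $|X|,|\nabla X|\in L^\infty$, and then invoke \cite[Theorem 2.1]{Caminha}. However, your application of that theorem is misstated and, as written, false. You claim that $\operatorname{div}Y\ge 0$, $|Y|$ bounded, and polynomial volume growth together force $\operatorname{div}Y\equiv 0$; but on $\mathbb{R}$ the field $Y=\arctan(x)\,\partial_x$ is bounded with $\operatorname{div}Y=(1+x^2)^{-1}>0$, so no such principle can hold. The maximum principle in \cite{Caminha} is not a statement about a lone vector field with one-signed divergence; it requires in addition a nonnegative function $u$ with $g(\nabla u,Y)\ge 0$ and a \emph{coercive} lower bound $\operatorname{div}Y\ge c\,u$ for some $c>0$, and its conclusion is $u\equiv 0$, not $\operatorname{div}Y\equiv 0$.

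The fix is immediate using ingredients you already have. From your first paragraph you possess $\tfrac12\operatorname{div}Y\ge\alpha|X|^2=\alpha u$; you also need the compatibility condition $g(\nabla u,Y)=|\nabla|X|^2|^2\ge 0$, which you omitted. With these two facts, together with your $L^\infty$ bound on $|Y|$ and the polynomial volume growth, \cite[Theorem 2.1]{Caminha} yields $u\equiv 0$ directly, i.e.\ $X\equiv 0$, and your closing paragraph (stationarity and $q$-flatness via the Quick Fact) then goes through unchanged. This is exactly the paper's route; the step you correctly flagged as ``the main obstacle'' is precisely where your write-up slips, and the repair is to feed in the strict inequality $\operatorname{div}Y\ge 2\alpha u$ you already derived rather than the weaker $\operatorname{div}Y\ge 0$.
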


\begin{proof}
Proceeding by contradiction, suppose $M$ is not stationary. Then, as in the proof of Theorem \ref{thm03}, we consider the nonnegative smooth function $u:=|X|^2$ and the smooth vector field $Y:=\nabla|X|^2$. From Lemma \ref{lemma01} we have
$$\frac{1}{2}{\rm div}Y=\frac{1}{2}\Delta|X|^2=|\nabla X|^2-{\rm Ric}(X,X)\geq\alpha u.$$
Also,
$$g(\nabla u,Y)=|\nabla|X|^2|^2\geq0.$$
On the other hand, from Kato's inequality we also get
$$|Y|=2|X||\nabla|X||\leq2|X||\nabla X|\ll+\infty,$$
since $|X|,|\nabla X|\in L^\infty(M)$. Hence, since $(M,g)$ has polynomial volume growth, we can invoke \cite[Theorem 2.1.]{Caminha} to conclude that $u$ vanishes identically on $M$, leading us to a contradiction. Therefore, $M$ must be stationary and $q$-flat.
\end{proof}

\subsection{Flatness via stochastic completeness}

Recall that a (not necessarily complete) Riemannian manifold $(M,g)$ is said to be {\em stochastically complete} if the heat kernel $p(x,y,t)$ of the Laplace-Beltrami operator, $\Delta$, satisfies
\begin{equation}\label{def.sto.com}
\int_{M}p(x,y,t)d\mu(y)=1,
\end{equation}
for some $(x,t)\in M\times(0,+\infty)$.
From a probabilistic viewpoint, stochastic completeness is the property of a stochastic process to have infinite life time. Further, if we consider a Brownian motion on a manifold, the property \eqref{def.sto.com} gives us that the full probability of the particle to be obtained in the state space is at most equal to one (see~\cite{Emery:89, Grigoryan, Grigoryan:89,Stroock:2000}).

In this way we can state a weaker version of the Theorem \ref{thm04} without requiring $|\nabla X|\in L^\infty$ and polynomial volume growth by only assuming stochastic completeness. In \cite{Pigola:03, Pigola:05}, Pigola, Rigoli and Setti show that stochastic completeness turns out to be equivalent to the validity of a weak form of the Omori-Yau maximum principle (see~\cite[Theorem 1.1]{Pigola:03} and~\cite[Theorem 3.1]{Pigola:05}), as it is expressed below.

\begin{lemma}\label{lema:PRS}
A Riemannian manifold $M$ is stochastically complete if, and only if, for every $u\in C^2(M)$ satisfying $\sup_Mu<+\infty$ there exists a sequence of points $\{x_k\}\subset M$ such that
$$\lim_{k}u(x_k)=\sup_Mu \quad \quad \mbox{and} \quad \quad \limsup_{k}\Delta u(x_k)\leq 0.$$
\end{lemma}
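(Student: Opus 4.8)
The plan is to establish the stated equivalence in two separate implications, using as a bridge the classical PDE characterization of stochastic completeness: $(M,g)$ is stochastically complete, i.e. satisfies \eqref{def.sto.com}, if and only if for some (equivalently, every) $\lambda>0$ the only bounded solution $w\ge 0$ of $\Delta w=\lambda w$ on $M$ is $w\equiv 0$. This reformulation encodes the heat-kernel condition analytically and follows from the study of the minimal heat semigroup (Grigor'yan, Khas'minskii); I would take it as the input and reduce the lemma to showing that the weak maximum principle is equivalent to the absence of nontrivial bounded nonnegative $\lambda$-harmonic functions.

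The implication from the weak maximum principle to stochastic completeness is the routine direction. Assuming the principle, let $w\ge 0$ be bounded with $\Delta w=\lambda w$ for some $\lambda>0$, and set $w^\ast=\sup_M w$. Applying the principle to $w$ produces a sequence $\{x_k\}$ with $w(x_k)\to w^\ast$ and $\limsup_k\Delta w(x_k)\le 0$. Since $\Delta w(x_k)=\lambda w(x_k)\to\lambda w^\ast$, the limit of $\Delta w(x_k)$ exists and equals $\lambda w^\ast$, so $\lambda w^\ast\le 0$. As $\lambda>0$ and $w\ge 0$, this forces $w^\ast=0$ and hence $w\equiv 0$; by the bridge characterization, $M$ is stochastically complete.

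For the converse I would argue by contradiction. Suppose $M$ is stochastically complete but the weak maximum principle fails for some $u\in C^2(M)$ with $u^\ast=\sup_M u<+\infty$. The negation of the principle is precisely the existence of constants $\gamma,c>0$ with $\Delta u\ge c$ on the super-level set $\Omega_\gamma=\{x:u(x)>u^\ast-\gamma\}$. On $\Omega_\gamma$ I would set $w=e^{au}$ with $a>0$; then $w$ is positive and bounded (because $u\le u^\ast$) and, from $\Delta w=a e^{au}(a|\nabla u|^2+\Delta u)\ge ac\,w$, it is a bounded positive subsolution of $\Delta w\ge\lambda w$ with $\lambda=ac$. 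Passing this subsolution through the heat-semigroup characterization of stochastic completeness yields a nontrivial bounded nonnegative $\lambda$-harmonic function, contradicting the bridge characterization and hence stochastic completeness.

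The main obstacle is this converse direction, and more precisely the step that upgrades the differential inequality on the proper open set $\Omega_\gamma$ into a genuine global contradiction. Because $\Omega_\gamma$ need not be all of $M$, the function $u$ is uncontrolled outside it and $w=e^{au}$ is only a subsolution on the subdomain, so one must manage the behavior of $w$ along $\partial\Omega_\gamma$ (where $u=u^\ast-\gamma$) and pass from a subsolution on a subdomain to a global object. The clean way to do this is through an exhaustion of $\Omega_\gamma$ together with the comparison and uniqueness theory for the Dirichlet heat semigroup; this is exactly the technical heart where the equivalence with \eqref{def.sto.com} established by Pigola, Rigoli and Setti does the real work.
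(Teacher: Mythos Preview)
The paper does not supply its own proof of this lemma: it is quoted verbatim as the Pigola--Rigoli--Setti characterization, with a citation to \cite[Theorem 1.1]{Pigola:03} and \cite[Theorem 3.1]{Pigola:05}, and is then used as a black box in the proof of Theorem~\ref{stochastic}. So there is nothing in the paper to compare your argument against beyond the reference itself.

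That said, your outline is the standard route taken in those references. The forward direction (weak maximum principle $\Rightarrow$ stochastic completeness via the $\lambda$-harmonic Liouville criterion) is exactly as you wrote and is complete. For the converse you have correctly located the genuine difficulty: the subsolution $w=e^{au}$ lives only on the super-level set $\Omega_\gamma$, and one must globalize. In the Pigola--Rigoli--Setti argument this is handled not by ``upgrading'' $w$ to a global $\lambda$-harmonic function, but by invoking yet another equivalent form of stochastic completeness, namely that for every $\lambda>0$ and every open set $\Omega\subsetneq M$ with $\partial\Omega\neq\emptyset$, any bounded-above $v\in C^2(\Omega)\cap C^0(\overline\Omega)$ satisfying $\Delta v\ge\lambda v$ on $\Omega$ and $v\le 0$ on $\partial\Omega$ must satisfy $v\le 0$ on $\Omega$. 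Applying this to $v=w-e^{a(u^\ast-\gamma)}$ on $\Omega_\gamma$ yields the contradiction directly, without needing to pass through the heat semigroup again. Your sketch points in the right direction but, as you yourself flag, the last paragraph is where the actual work is deferred back to the cited papers; filling it in would essentially reproduce their argument.
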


In this sense by applying Lemma~\ref{lema:PRS} we are able to prove the following:

\begin{theorem}\label{stochastic}
Let $(M,g,X)$ be a stochastically complete soliton to the $q$-flow such that $q$ is trace-free and divergence-free, and Ricci curvature satisfies ${\rm Ric}\leq-\beta g$, for some positive constant $\beta$. If $|X|\in L^\infty(M)$, then $(M,g)$ must be stationary and  $q$-flat.
\end{theorem}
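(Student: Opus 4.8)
The plan is to follow the same template as the proof of Theorem \ref{thm04}, but to replace the polynomial-volume-growth maximum principle with the weak Omori--Yau maximum principle guaranteed by stochastic completeness (Lemma \ref{lema:PRS}). First I would set $u := |X|^2$; this is a nonnegative smooth function on $M$, and the hypothesis $|X|\in L^\infty(M)$ gives $\sup_M u<+\infty$, so that $u$ is an admissible test function for Lemma \ref{lema:PRS}. If $X\equiv 0$ there is nothing to prove, so I may assume $u\not\equiv 0$.

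The heart of the argument is to produce a differential inequality with a \emph{positive} zeroth-order coefficient. Since $q$ is trace-free and divergence-free, the right-hand side of Lemma \ref{lemma01} vanishes, so that
\[
\tfrac12\Delta|X|^2=|\nabla X|^2-{\rm Ric}(X,X).
\]
The curvature hypothesis ${\rm Ric}\le-\beta g$ gives $-{\rm Ric}(X,X)\ge\beta|X|^2$, and discarding the nonnegative term $|\nabla X|^2$ yields
\[
\Delta u\ge 2\beta u\qquad\text{on }M.
\]
It is precisely here that I would stress the role of the strict sign: with only ${\rm Ric}\le 0$ (as in Theorems \ref{thm02} and \ref{thm03}) one obtains merely $\Delta u\ge 0$, and the weak maximum principle alone cannot conclude; the strictly negative bound is what upgrades subharmonicity to the inequality $\Delta u\ge 2\beta u$ with $\beta>0$.

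To finish, I would invoke Lemma \ref{lema:PRS} to obtain a sequence $\{x_k\}\subset M$ with $u(x_k)\to\sup_M u$ and $\limsup_k\Delta u(x_k)\le 0$. Evaluating $\Delta u\ge 2\beta u$ along $\{x_k\}$ and passing to the $\limsup$ gives
\[
0\ \ge\ \limsup_k\Delta u(x_k)\ \ge\ \limsup_k 2\beta u(x_k)\ =\ 2\beta\sup_M u .
\]
Since $\beta>0$ and $u\ge 0$, this forces $\sup_M u=0$, hence $u\equiv 0$ and $X\equiv 0$. Then $X$ is trivially Killing, so $(M,g)$ is stationary, and by the Quick Fact (equivalently, substituting $\mathcal{L}_X g=0$ into \eqref{q soliton} and tracing to get $\lambda=0$ and $q=0$) the soliton is $q$-flat.

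I expect the only delicate point to be the bookkeeping in the $\limsup$ chain---in particular, checking that the pointwise inequality $\Delta u(x_k)\ge 2\beta u(x_k)$ is preserved under $\limsup$, and that $\limsup_k 2\beta u(x_k)=2\beta\sup_M u$ because the limit along $\{x_k\}$ genuinely exists---together with the conceptual observation above that the argument truly requires the strict Ricci bound ${\rm Ric}\le-\beta g$ rather than mere non-positivity.
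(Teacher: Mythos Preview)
Your proof is correct and follows essentially the same strategy as the paper's: derive a differential inequality with a strictly positive zeroth-order coefficient from Lemma~\ref{lemma01} and the Ricci bound, then apply the weak Omori--Yau principle (Lemma~\ref{lema:PRS}) to force the supremum to vanish. The only cosmetic difference is that the paper works with $|X|$ (invoking Kato's inequality to obtain $\Delta|X|\ge\beta|X|$) whereas you work directly with $u=|X|^2$ and obtain $\Delta u\ge 2\beta u$; your route is marginally cleaner since it sidesteps Kato and any smoothness issues at zeros of $X$.
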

\begin{proof}
An application of Lemma \ref{lemma01} jointly with Kato's inequality give us that
\begin{eqnarray*}
|X|\Delta|X|+|\nabla|X||^2 &=& \frac{1}{2}\Delta|X|^2=|\nabla X|^2-{\rm Ric}(X,X)\\
&\geq& |\nabla|X||^2 +\beta|X|^2,
\end{eqnarray*}
so that,
$$\Delta|X|\geq\beta|X|.$$

Finally, if $\sup_M|X|>0$ and assuming $|X|\in L^\infty(M)$ and that $M$ is stochastically complete, from Lemma~\ref{lema:PRS} there is a sequence of points $\{x_k\}\subset M$ such that
$$0\geq\limsup_k\Delta|X|(x_k)\geq\beta\sup_M|X|>0,$$
but this is a contradiction. Therefore, $M$ is stationary and $q$-flat.
\end{proof}

\subsection{Flatness without divergence-free condition}\label{no div free}
In \cite[Proposition 3.12]{Griffin}, the second author shows that in the compact case the additional assumption that $X= \nabla f$ allows us to remove the condition that $q$ be divergence-free. Following Ho's work for Bach solitons in \cite{Ho}, we get the following generalization of \cite[Corollary 3.3]{Ho} for the non-compact case for a general tensor $q$.

\begin{theorem}\label{thm05}
Let $(M, g, f)$ be a non-compact gradient $q$-soliton with non-negative sectional curvature where the gradient of the potential function, $f$, satisfies
\[ \int_M \vert \nabla f \vert ^{\frac{n}{n-1}} < \infty. \] 
If $q$ is trace-free, then $M$ is $q$-flat.
\end{theorem}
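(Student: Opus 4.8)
The plan is to show that $u := |\nabla f|$ is a non-negative subharmonic function lying in $L^{n/(n-1)}(M)$, so that Yau's Lemma~\ref{lemma Yau} forces it to be constant, and then to feed this back into a Bochner identity to conclude $\hess f \equiv 0$. The first observation is that tracing the gradient soliton equation \eqref{grad q soliton} and using that $q$ is trace-free gives $\Delta f = n\lambda + \tfrac{1}{2}\mathrm{tr}(q) = n\lambda$, a constant. This is precisely where the trace-free (but \emph{not} divergence-free) hypothesis enters: because $\Delta f$ is constant, the troublesome gradient term in Bochner's formula drops out and we never need to control $\mathrm{div}(q)$.

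Indeed, applying the usual Bochner formula to $f$,
\[
\tfrac{1}{2}\Delta|\nabla f|^2 = |\hess f|^2 + \langle \nabla f, \nabla(\Delta f)\rangle + \ric(\nabla f, \nabla f),
\]
and since $\Delta f$ is constant the middle term vanishes, leaving
\[
\tfrac{1}{2}\Delta|\nabla f|^2 = |\hess f|^2 + \ric(\nabla f, \nabla f).
\]
(This agrees with Lemma~\ref{lemma01} once one uses the soliton identity $\mathrm{div}(q)(\nabla f) = 2\,\ric(\nabla f,\nabla f)$, obtained by taking the divergence of \eqref{grad q soliton} and commuting derivatives.) Non-negative sectional curvature gives $\ric \geq 0$, so the right-hand side is non-negative and $|\nabla f|^2$ is subharmonic. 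To apply Yau's lemma I must pass to $u = |\nabla f|$ rather than $|\nabla f|^2$: the hypothesis $\int_M |\nabla f|^{n/(n-1)} < \infty$ says $u \in L^{n/(n-1)}(M)$ with $n/(n-1) > 1$, whereas $|\nabla f|^2$ would only lie in $L^{n/(2(n-1))}$ with exponent $\leq 1$, outside the range of Lemma~\ref{lemma Yau}. Kato's inequality $|\nabla|\nabla f|| \leq |\hess f|$ shows, wherever $\nabla f \neq 0$, that $u\,\Delta u = \tfrac{1}{2}\Delta|\nabla f|^2 - |\nabla|\nabla f||^2 \geq \ric(\nabla f, \nabla f) \geq 0$, so $u$ is (weakly) subharmonic.

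With $u \geq 0$ subharmonic and $u \in L^{p}(M)$ for $p = n/(n-1) > 1$, Lemma~\ref{lemma Yau} applied on the complete manifold $M$ gives that $u = |\nabla f|$ is constant. Hence $|\nabla f|^2$ is constant and $\Delta|\nabla f|^2 = 0$, so the Bochner identity collapses to $|\hess f|^2 + \ric(\nabla f, \nabla f) = 0$. As both summands are non-negative, each vanishes; in particular $\hess f \equiv 0$. Substituting into \eqref{grad q soliton} yields $\lambda g + \tfrac{1}{2}q = 0$, and taking the trace (again using $\mathrm{tr}(q) = 0$) forces $\lambda = 0$, whence $q \equiv 0$ and $M$ is $q$-flat.

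The routine parts are the two trace computations and Bochner's formula. The main technical point to handle with care is the passage from $|\nabla f|^2$ to $u = |\nabla f|$ via Kato's inequality and the justification that the resulting (merely Lipschitz) function $u$ is admissible in Yau's lemma — the standard remedy is to run the argument with $u_\varepsilon = \sqrt{|\nabla f|^2 + \varepsilon}$ and let $\varepsilon \to 0$. The conceptual heart of the result is recognizing that the exponent $n/(n-1)$ is chosen exactly so that $|\nabla f|$, rather than its square, lands in the range $p > 1$ where Yau's lemma is available.
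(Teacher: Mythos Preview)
Your argument is correct (the Lipschitz-versus-smooth issue in applying Lemma~\ref{lemma Yau} to $|\nabla f|$ is real but routine, and you have flagged it with the $u_\varepsilon$ regularization, which indeed shows $|\nabla f|$ is weakly subharmonic). However, the paper takes a shorter and genuinely different route. Instead of working with $|\nabla f|$ via Bochner and Kato, the paper applies Karp's Stokes-type theorem \cite[Corollary~2$'$]{Karp} directly to the potential $f$: since $\Delta f = n\lambda$ is constant, either $f$ or $-f$ is subharmonic, and Karp's result---valid on complete non-compact manifolds of non-negative sectional curvature under the hypothesis $\int_M|\nabla f|^{n/(n-1)}<\infty$---forces $f$ to be constant; then $\lambda=0$ and $q\equiv 0$ follow at once from \eqref{grad q soliton}. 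Thus the paper never invokes Bochner, Kato, or Yau's $L^p$ Liouville theorem, and the exponent $n/(n-1)$ arises there as the natural exponent in Karp's extension of the divergence theorem rather than as a threshold for Lemma~\ref{lemma Yau}. Your approach has the advantage of staying entirely within the toolkit already assembled in Section~\ref{Q Section} (namely Lemma~\ref{lemma Yau}), at the cost of the extra Kato and regularity step; it also yields only $\hess f\equiv 0$ rather than $f$ constant, but that suffices for the stated conclusion of $q$-flatness.
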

\begin{proof}
Consider such a $q$-soliton, $(M, g, f)$. Taking the trace of \eqref{grad q soliton}, we get that $\Delta f = \lambda n$. Consequently, if $\lambda\geq 0$ then $f$ is subharmonic and if $\lambda<0$ then $-f$ is subharmonic. By assumption, $M$ is a complete non-compact manifold with non-negative sectional curvature and $\int_M \vert \nabla f \vert ^{\frac{n}{n-1}} < \infty$. Applying \cite[Corollary 2']{Karp}, we see that $f$ is constant and thus $\lambda=0$. Therefore $M$ is stationary and $q$-flat.
\end{proof}

When combined with the compact case, \cite[Proposition 3.12]{Griffin}, we see that we can generalize this result to the complete case as stated in Theorem \ref{thm intro 02}.

The following is another application of Lemma \ref{lemma Yau}.

\begin{theorem}\label{thm06}
Let $(M, g, f)$ be a non-expanding complete gradient $q$-soliton with nonnegative potential function, $f$, such that $f\in L^p(M)$ for some $p>1$. If $q$ is trace-free, then $M$ is $q$-flat.
\end{theorem}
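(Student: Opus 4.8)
The plan is to prove Theorem \ref{thm06} by showing that the nonnegative potential function $f$ is subharmonic, so that Yau's integrability lemma (Lemma \ref{lemma Yau}) forces $f$ to be constant, after which the Quick Fact in Section \ref{Q Section} finishes the argument. First I would take the trace of the gradient soliton equation \eqref{grad q soliton}. Since $q$ is trace-free, tracing gives $\Delta f = \lambda n$. The hypothesis that the soliton is non-expanding means $\lambda \geq 0$, so $\Delta f = \lambda n \geq 0$ and $f$ is subharmonic.

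Next I would invoke the standing hypotheses: $M$ is complete, $f$ is nonnegative, $f$ is subharmonic, and $f \in L^p(M)$ for some $p>1$. These are exactly the conditions of Lemma \ref{lemma Yau} (Yau), so I would apply it directly to $u = f$ to conclude that $f$ is constant on $M$. With $f$ constant we immediately get $\hess f = 0$, and tracing \eqref{grad q soliton} once more (or simply reading off $\Delta f = \lambda n = 0$) gives $\lambda = 0$, so in fact the soliton is steady as well.

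Finally I would combine constancy of $f$ with the Quick Fact. A constant potential function means the soliton is stationary (indeed $X = \nabla f = 0$ is trivially Killing), and since $q$ is trace-free the Quick Fact tells us a stationary soliton is steady and $q$-flat. Thus $q \equiv 0$ on $M$, which is the desired conclusion.

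I do not anticipate a serious obstacle here, since the result is a clean application of Yau's lemma in the spirit of Theorem \ref{thm01}(3); the only points requiring care are that the hypotheses of Lemma \ref{lemma Yau} are met verbatim (nonnegativity of $f$ is assumed, subharmonicity follows from non-expansion and trace-freeness, and $L^p$ integrability is assumed), and that ``non-expanding'' is correctly interpreted as $\lambda \geq 0$ so that $f$ itself—rather than $-f$—is the subharmonic function to which the lemma applies. This sign bookkeeping is the one place where the argument for a general $\lambda$ would break down, and it is precisely why the hypothesis restricts to the non-expanding case.
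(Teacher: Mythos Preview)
Your proposal is correct and matches the paper's proof essentially line for line: trace the gradient soliton equation to get $\Delta f=\lambda n\geq 0$, apply Yau's lemma (Lemma~\ref{lemma Yau}) to the nonnegative $L^p$ function $f$ to force $f$ constant, then read off $\lambda=0$ and $q=0$. The only cosmetic difference is that you invoke the Quick Fact explicitly at the end, whereas the paper concludes $q$-flatness directly from $\lambda=0$ and the structural equation.
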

\begin{proof}
Since $q$ is trace-free we have from structural equation, \eqref{grad q soliton}, that
$$\Delta f=\lambda n\geq0.$$
So, from our assumptions on the potential function together Lemma \ref{lemma Yau} we obtain $f$ is a constant. Hence $\lambda=0$ and $M$ must be $q$-flat.
\end{proof}

\subsubsection{Flatness in compact conformal case}

Continuing the work from above, we also look to see what other conditions can be placed on $X$ to remove the need for $q$ to be divergence-free. To do so, we limit our scope to the compact case to investigate the effect of making $X$ a conformal vector field. 

In \cite[Lemma 3.6]{Griffin}, the second author proved that for any  $n$-dimensional compact $q$-soliton, if $q$ is divergence-free then $X$ is Killing. Additionally, she showed that if $q$ is trace-free the soliton must be $q$-flat. In the following theorem we prove a similar result that removes the requirement that $q$ be divergence-free by assuming $X$ is a conformal vector field.

\begin{theorem}\label{thm01 compact}
Let $(M, g, X)$ be a compact $q$-soliton to the $q$-flow such that $X$ is a conformal vector field. If $q$ is constant trace, then $X$ is Killing vector field. In addition if $q$ is trace-free then $(M,g)$ must be $q$-flat.
\end{theorem}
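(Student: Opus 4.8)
The plan is to use the conformal condition on $X$ to collapse the soliton equation \eqref{q soliton} into a pointwise proportionality between $q$ and $g$, and then let compactness together with the trace hypothesis pin down the conformal factor. First I would record that $X$ being conformal means $\mathcal{L}_X g = 2\rho\, g$ for some smooth function $\rho$; taking the trace and using the identity $\mathrm{tr}(\mathcal{L}_X g) = 2\,\mathrm{div}(X)$ identifies $\rho = \tfrac{1}{n}\mathrm{div}(X)$. Substituting $\mathcal{L}_X g = 2\rho\, g$ into \eqref{q soliton} gives $\rho\, g = \lambda g + \tfrac{1}{2}q$, hence
\[
q = 2(\rho - \lambda)\, g.
\]

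Taking the trace of this last identity yields $\mathrm{tr}(q) = 2n(\rho - \lambda)$. Since $q$ is assumed to have constant trace, $\rho$ must be constant on $M$. The key step is then the compactness argument: because $\rho = \tfrac{1}{n}\mathrm{div}(X)$ is constant and $M$ is closed, the divergence theorem forces $\int_M \mathrm{div}(X)\, dV = 0$, and a constant with vanishing integral must itself vanish. Thus $\mathrm{div}(X) = 0$, so $\rho \equiv 0$, and feeding this back into $\mathcal{L}_X g = 2\rho\, g = 0$ shows that $X$ is Killing, which is the first assertion.

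For the second assertion, if in addition $q$ is trace-free, then $X$ being Killing makes the soliton stationary, and the Quick Fact at the opening of Section \ref{Q Section} immediately gives that $M$ is $q$-flat. Alternatively, one may argue directly: with $\rho \equiv 0$ the displayed identity reads $q = -2\lambda\, g$, whose trace gives $0 = -2\lambda n$, so $\lambda = 0$ and therefore $q \equiv 0$.

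I do not expect a serious obstacle here — once the conformal hypothesis is used to linearize \eqref{q soliton}, the argument is short. The only points requiring care are the trace bookkeeping (namely $\mathrm{tr}(\mathcal{L}_X g) = 2\,\mathrm{div}(X)$ and the sign and normalization of $\rho$) and the recognition that it is precisely the combination of constant trace with compactness that annihilates the conformal factor; the trace-free conclusion then follows at once from the Quick Fact.
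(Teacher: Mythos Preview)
Your proof is correct and takes a slightly different (and more elementary) route than the paper's. The paper also starts from $q=(\psi-2\lambda)g$ with $\psi$ the conformal factor, but then computes $\operatorname{div}(q)=\nabla\psi$ and $\nabla\operatorname{tr}(q)=n\operatorname{div}(q)$, so that the constant-trace hypothesis makes $q$ divergence-free; at that point the paper invokes \cite[Lemma 3.6]{Griffin} (compact $q$-soliton with divergence-free $q$ has $X$ Killing) as a black box. You instead bypass both the divergence-free observation and the external lemma by noting directly that $\rho=\tfrac{1}{n}\operatorname{div}(X)$ is constant and appealing to the divergence theorem on a closed manifold to kill it. Your argument is more self-contained; the paper's has the virtue of explicitly tying the result back into the divergence-free framework developed earlier. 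The trace-free conclusion is handled identically in both.
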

\begin{proof}
Since $X$ is a conformal vector field we have that
$$\mathcal{L}_Xg=\psi g,$$
for some smooth function $\psi$ on $M$. So, from equation \eqref{q soliton} we get
$$q=(\psi-2\lambda)g.$$
Observe that
$${\rm div}(q)=\nabla(\psi-2\lambda)=\nabla\psi,$$
and
$${\rm tr}(q)=(\psi-2\lambda)n.$$
Hence, 
\begin{equation}\label{eq tr}
    \nabla{\rm tr}(q)=n{\rm div}(q). 
\end{equation}

Since $q$ is constant trace we obtain that $q$ must be divergence-free. Because $M$ is compact, we can invoke \cite[Lemma 3.6]{Griffin} to conclude that $X$ is Killing. Applying the quick fact, if $q$ is trace-free then $M$ must be $q$-flat.
\end{proof}

%We conjecture that, with additional conditions on $X$ and the curvature of $M$, we can apply the previously stated theorems to get results in the non-compact case. We will however

Because this theorem only deals with the compact case, our examples (presented in the remaining sections) will prove to be rather uninteresting settings to see this theorem at work. Let us, therefore, briefly consider the case where $q= 2 \left( \frac{R}{n} g - \ric \right)$. That is, consider traceless Ricci solitons given by: 
\begin{equation}\label{T R soliton}
\frac{1}{2}\mathcal{L}_Xg=\lambda g+ \frac{R}{n} g - \ric.
\end{equation}
This tensor is trace-free (as the name suggests), but it is not necessarily divergence-free so \cite[Lemma 3.6]{Griffin} cannot be applied. Furthermore, \cite[Theorem 1.1]{CM} states that any compact gradient traceless Ricci soliton is trivial. Theorem \ref{thm01 compact}, then,  allows us to expand to the case where $X$ is conformal. (Note, this result can also be found in Dwivedi's paper, \cite{Dwivedi}, where they consider the traceless Ricci soliton as a type of Ricci-Bourguignon soliton.)

\begin{corollary} [Dwivedi] \label{TR cor01 compact}
Let $(M, g, X)$ be a compact traceless Ricci soliton such that $X$ is a conformal vector field. Because the tensor is trace-free, then $(M,g)$ must be stationary and $\frac{R}{n} g - \ric = 0$.  \end{corollary}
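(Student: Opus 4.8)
The plan is to recognize that Corollary \ref{TR cor01 compact} is nothing more than a direct specialization of Theorem \ref{thm01 compact} to the particular tensor $q = 2\left(\frac{R}{n}g - \ric\right)$, so the entire proof should consist of verifying that the hypotheses of that theorem are satisfied and then reading off its conclusion. First I would observe that the traceless Ricci tensor $\frac{R}{n}g - \ric$ is trace-free: taking the trace gives $\frac{R}{n}\cdot n - R = R - R = 0$, so the associated $q$ is trace-free and in particular has constant trace (identically zero). This immediately places us within the scope of Theorem \ref{thm01 compact}, whose hypotheses require only that $(M,g,X)$ be a compact $q$-soliton, that $X$ be conformal, and that $q$ have constant trace.

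Next I would apply Theorem \ref{thm01 compact} directly. Since $X$ is assumed conformal and $q$ has constant trace, the theorem guarantees that $X$ is a Killing vector field, i.e. $(M,g)$ is stationary. Moreover, because $q$ is in fact trace-free, the second conclusion of Theorem \ref{thm01 compact} applies and yields that $(M,g)$ is $q$-flat, meaning $q = 2\left(\frac{R}{n}g - \ric\right) = 0$, which is equivalent to $\frac{R}{n}g - \ric = 0$. This is exactly the desired conclusion, so the proof would simply cite Theorem \ref{thm01 compact} and specialize.

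There is essentially no hard part here, since the corollary is a formal consequence of a theorem proved earlier in the excerpt; the only point requiring a moment's care is confirming that the traceless Ricci tensor genuinely qualifies as a valid choice of $q$ for the $q$-flow framework and that its trace vanishes identically (not merely being constant). The one substantive remark worth making — and the reason the corollary is interesting rather than trivial — is that this $q$ is not in general divergence-free, so the earlier compact result \cite[Lemma 3.6]{Griffin} does not apply directly; it is precisely the conformality of $X$, exploited in Theorem \ref{thm01 compact} via the identity $\nabla\,\mathrm{tr}(q) = n\,\mathrm{div}(q)$ forcing $q$ to be divergence-free once its trace is constant, that allows us to bypass that obstruction. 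I would therefore keep the proof brief, citing both Theorem \ref{thm01 compact} and \cite{Dwivedi} for the reader's benefit, and emphasize in one sentence that the trace-free property of $\frac{R}{n}g - \ric$ is what drives the final flatness conclusion.
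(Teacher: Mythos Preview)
Your proposal is correct and takes essentially the same approach as the paper: the paper's own proof consists of the single sentence ``The proof follows directly from Theorem \ref{thm01 compact},'' together with a reference to \cite[Theorem 1.3]{Dwivedi}. Your write-up simply unpacks why that direct application works, including the trace computation and the remark about why divergence-freeness is not needed, which is exactly the content implicit in the paper's one-line proof.
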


\begin{proof}
The proof follows directly from Theorem \ref{thm01 compact}. See also \cite[Theorem 1.3]{Dwivedi} for a more complete proof. 
\end{proof}

Note, since $\ric = \frac{R}{n}g$, $M$ is Einstein and $R$ is constant. Further, Dwivedi shows in \cite[Theorem 1.5]{Dwivedi} that, for such a soliton, if $X$ is nontrivial then $M$ is isometric to a Euclidean sphere.

\section{Applications to Ambient Obstruction Solitons}\label{AOT Section}

We begin our demonstration of the efficacy of our general results by applying them to ambient obstruction solitons to get a number of new results. %Because the ambient obstruction tensor itself is both trace-free and divergence-free, we are able to apply the entire catalog of above results to this 

As we mentioned in Section \ref{intro section}, the Bach tensor in dimension $n=4$ proves to be an interesting example of a divergence-free, trace-free tensor. Seeking a higher dimensional tensor that maintains the properties of the Bach tensor, we find the {\em ambient obstruction tensor}, $\mathcal{O}$ , defined for even dimension $n\geq4$ to be a natural choice. This tensor was introduced by Fefferman and Graham \cite{FG} and is defined explicitly by \eqref{AOT Eqn}.

Like the Bach tensor, the ambient obstruction tensor is trace-free, divergence-free, and conformally invariant of weight $2-n$. In fact, the ambient obstruction tensor for $n=4$ is the Bach tensor. Moreover, the ambient obstruction tensor is the gradient of the $Q$-energy, where $Q$ is Branson's $Q$-curvature. So, like Bach-flat metrics, ambient obstruction flat metrics are critical points of the $Q$-energy. (For further background about the ambient obstruction tensor, see \cite{BH1, FG, GH, Griffin, Lopez}).

In \cite{BH1, BH2, Lopez}, Bahuad, Helliwell, and Lopez introduce the {\em ambient obstruction flow} given by
\begin{equation}\label{O flow}
\frac{\partial}{\partial t}g(t)=\mathcal{O}_n+c_n(-1)^{\frac{n}{2}}(\Delta^{\frac{n}{2}-1}R)g,\,\,\,{\rm with}\,\,\,g(0)=h,
\end{equation}
 where $g(t)$ is a one parameter family of metrics on a Riemannian manifold of even dimension $n\geq 4$ and
\[c_n=\frac{1}{2^{\frac{n}{2}-2}\left(\frac{n}{2}-2\right)!(n-2)(n-1)}.\]
In \cite{BH1, BH2} Bahuaud and Helliwell show short-time existence and uniqueness on compact manifolds. Further, in \cite{Lopez}, Lopez expands this definition to a more general setting. 
Note, the addition of the scalar curvature term counteracts the conformal properties of the ambient obstruction tensor under the flow and is necessary to prove that the flow exists. %This inclusion distinguishes the ambient obstruction flow form the Cotton and Bach flows discussed in Sections \ref{Cotton Section} and \ref{Bach Section} respectively. 

For $n=4$ the flow \eqref{O flow} takes the form 
\begin{equation}
\frac{\partial}{\partial t}g(t)=B+\frac{1}{12}(\Delta R)g,\,\,\,{\rm with}\,\,\,g(0)=h.
\end{equation}
To distinguish this from our notion of the Bach flow that will be presented in Section \ref{Bach Section}, we will call this flow the {\em modified Bach flow}. This is the flow that Helliwell considers in his paper \cite{Helliwell}, in which he establishes an explicit representation for the Bach tensor on manifolds of the form $\R \times N^3$ where $N^3$ is a 3-manifold that is a unimodular Lie group. 
% We will only consider this variation of the Bach flow for $n=4$ as it takes into account the conformal weight of the Bach tensor in said dimension. (We will consider the Bach flow as defined in \cite{Ho} for $n \geq 5$ in Section \ref{Bach Section}.)

Using (\ref{q soliton}), one quickly sees that a Riemannian manifold $(M, g, X)$ is an {\em ambient obstruction soliton} if $g$ satisfies the equation
\begin{equation}\label{soliton O}
\frac{1}{2}\mathcal{L}_Xg=\lambda g+ \frac{1}{2}\left(\mathcal{O}_n+c_n(-1)^{\frac{n}{2}}(\Delta^{\frac{n}{2}-1}R)g\right)
\end{equation}
where $X$ is a vector field on $M$ and $c_n$ is as given above. Furthermore, in dimension $n=4$ the equation \eqref{soliton O} becomes in the {\em modified Bach soliton} given by
\begin{equation}\label{modified Bach soliton}
\frac{1}{2}\mathcal{L}_Xg=\lambda g+ \frac{1}{2}\left(B+\frac{1}{12}(\Delta R)g\right).
\end{equation}
As in (\ref{grad q soliton}), when $X=\nabla f$ the equations \eqref{soliton O} and \eqref{modified Bach soliton} become
\[\hess f=\lambda g+\frac{1}{2}\left(\mathcal{O}_n+c_n(-1)^{\frac{n}{2}}(\Delta^{\frac{n}{2}-1}R)g\right)\]
and
\[\hess f=\lambda g+\frac{1}{2}\left(B+\frac{1}{12}(\Delta R)g\right).\]

In order to use the fact that the ambient obstruction tensor is trace-free and divergence-free, we will limit ourselves to the setting where $M$ has constant scalar curvature. In so doing, we see results over homogeneous manifolds. In this setting the equations for the ambient obstruction flow and Bach flow ($n=4$) are, respectively, given by:
\[\frac{\partial}{\partial t}g(t)=\mathcal{O}_n \quad \text{and} \quad \frac{\partial}{\partial t}g(t)=B,\]
with $g(0)=h$. Furthermore, the equations for solitons are given by:
\[ \frac{1}{2}\mathcal{L}_Xg=\lambda g+ \frac{1}{2}\mathcal{O}_n \quad \text{and} \quad  \frac{1}{2}\mathcal{L}_Xg=\lambda g+ \frac{1}{2} B,\]
and the equations for gradient solitons by:
\[ \hess f=\lambda g+ \frac{1}{2}\mathcal{O}_n \quad \text{and} \quad  \hess f=\lambda g+ \frac{1}{2} B.\]
In each instance note that by stipulating that $M$ has constant scalar curvature, we reduce the problem to setting $q= \calO_n$. As such, we get the following corollaries of our results in Section \ref{Q Section}.

% \begin{corollary}\label{cor AOT 01} Let $(M,g,X)$ be an $n$-dimensional complete, non-compact ambient obstruction soliton with non-positive Ricci curvature. If any one of the following conditions are met, then $(M,g)$ is stationary and $\calO$-flat:
% \begin{enumerate}
% \item \begin{equation}%\label{integral condition AOT}
% \int_{M\diagdown B_r(x_0)}d(x,x_0)^{-2}|X|^2<\infty\,\,\,{\color{blue}{\text{and}}\,\,\,{\rm Ric}\geq-(n-1)\frac{k^2}{1+r^2}},
% \end{equation}
% {\color{blue}in the sense of quadratic forms},
% \item If $M$ is parabolic and $|X|\in L^\infty(M)$ ,
% \item If $|X|\in L^p(M)$ for some $p>1$.
% \end{enumerate}
% \end{corollary}

\begin{corollary}\label{cor AOT 012} Let $(M,g,X)$ be an $n$-dimensional complete, non-compact ambient obstruction soliton with non-positive Ricci curvature. If $M$ is parabolic and $|X|\in L^\infty(M)$ or if $|X|\in L^p(M)$ for $p>1$, then $(M,g)$ is stationary and $\calO$-flat.
\end{corollary}

\begin{proof}
This follows directly from Theorem \ref{thm012}.
\end{proof}

Like in Section \ref{Q Section}, when we consider the compact case as in \cite[Corollary 3.7]{Griffin}, we see that we can generalize this statement to the complete case. That is, any complete ambient obstruction soliton satisfying the appropriate curvature conditions and one of the three listed conditions is $\calO$-flat. 

\begin{corollary}\label{cor AOT 011} Let $(M,g,X)$ be an $n$-dimensional complete, non-compact ambient obstruction soliton. Further, suppose the Ricci curvature of $(M,g,X)$ satisfies: 
\[-(n-1)\frac{k^2}{1+r^2} \leq \ric \leq 0\]
in the sense of quadratic forms. 
If $\int_{M\diagdown B_r(x_0)}d(x,x_0)^{-2}|X|^2<\infty$, then $(M,g)$ is stationary and $\calO$-flat.
\end{corollary}

\begin{proof}
This follows directly from Theorem \ref{thm011}.
\end{proof}

\begin{corollary}\label{cor AOT 02}
Let $(M,g,f)$ be a complete gradient ambient obstruction soliton. If $|\nabla f|$ converges to zero at infinity, then $M$ must be stationary and $\calO$-flat.
\end{corollary}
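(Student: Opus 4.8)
The plan is to realize this statement as a direct specialization of Theorem \ref{thm02}. The essential observation is that, under the constant scalar curvature hypothesis maintained throughout this section, the defining equation of a gradient ambient obstruction soliton collapses to $\hess f = \lambda g + \frac{1}{2}\calO_n$, which is exactly the gradient $q$-soliton equation \eqref{grad q soliton} with the choice $q = \calO_n$. Thus the proof should amount to checking that $q = \calO_n$ verifies the structural hypotheses of Theorem \ref{thm02} and then quoting that theorem.

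First I would recall, as established in the introduction to this section, that the ambient obstruction tensor $\calO_n$ is both trace-free and divergence-free. This is precisely the pair of conditions Theorem \ref{thm02} imposes on the tensor $q$. The completeness of $(M,g)$ and the gradient structure $X = \nabla f$ are given in the hypothesis, and the remaining assumption --- that $|\nabla f|$ converges to zero at infinity --- matches the single analytic requirement of Theorem \ref{thm02}.

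With all hypotheses verified, I would simply invoke Theorem \ref{thm02} to conclude that $M$ is stationary and $q$-flat, which in this setting reads as $\calO$-flat. The only point requiring genuine care --- rather than a true obstacle --- is the reduction to $q = \calO_n$: without the constant scalar curvature assumption the scalar-curvature term $c_n(-1)^{\frac{n}{2}}(\Delta^{\frac{n}{2}-1}R)g$ appearing in the ambient obstruction flow \eqref{O flow} would survive, destroying the trace-free property and placing the problem outside the scope of Theorem \ref{thm02}. Provided this reduction is in force, no further argument is needed.
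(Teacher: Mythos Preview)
Your proposal is correct and matches the paper's own proof, which simply states that the result follows directly from Theorem \ref{thm02}. Your added discussion of the constant scalar curvature reduction is accurate and makes explicit the contextual hypothesis the paper leaves implicit in this section.
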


\begin{proof}
This follows directly from Theorem \ref{thm02}.
\end{proof}

\begin{corollary}\label{cor AOT 03}
Let $(M,g,X)$ be a complete ambient obstruction soliton with non-positive Ricci curvature. If $|X|$ converges to zero at infinity, then $M$ must be stationary and $\calO$-flat.
\end{corollary}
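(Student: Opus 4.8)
The plan is to obtain this statement as an immediate specialization of Theorem \ref{thm03}, since the genuine analytic content is entirely contained there. The first step is to recognize that, under the constant scalar curvature convention adopted throughout Section \ref{AOT Section}, an ambient obstruction soliton is exactly a $q$-soliton with $q = \calO_n$. Indeed, once $R$ is constant the correction term $c_n(-1)^{\frac{n}{2}}(\Delta^{\frac{n}{2}-1}R)g$ appearing in \eqref{soliton O} vanishes identically, and the defining equation collapses to $\frac{1}{2}\mathcal{L}_Xg = \lambda g + \frac{1}{2}\calO_n$, which is precisely the $q$-soliton equation \eqref{q soliton} for the choice $q = \calO_n$.

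With this identification in hand, I would verify that the hypotheses of Theorem \ref{thm03} hold. The tensorial requirements are met because $\calO_n$ is trace-free and divergence-free, as recalled at the start of Section \ref{AOT Section}; the analytic hypotheses --- completeness, non-positivity of the Ricci curvature, and the decay of $|X|$ to zero at infinity --- carry over verbatim from the statement of the corollary. Applying Theorem \ref{thm03} then forces $M$ to be stationary and $q$-flat, and since $q = \calO_n$ this is exactly the assertion that $(M,g)$ is stationary and $\calO$-flat.

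I do not anticipate any genuine obstacle here, as the corollary is a direct application rather than a fresh argument. The only point deserving care is the opening reduction: one must invoke the constant scalar curvature assumption of this section to be certain that the full soliton tensor is trace-free, for without it the correction term carries a nonzero trace $n\,c_n(-1)^{\frac{n}{2}}\Delta^{\frac{n}{2}-1}R$ and Theorem \ref{thm03} could not be applied. Once that reduction is justified, the conclusion follows in a single line by citation of Theorem \ref{thm03}.
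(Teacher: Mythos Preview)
Your proposal is correct and matches the paper's own proof, which simply reads ``This follows directly from Theorem \ref{thm03}.'' You have supplied more detail than the paper does --- spelling out the constant scalar curvature reduction and the verification that $\calO_n$ is trace-free and divergence-free --- but the approach is identical.
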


\begin{proof}
This follows directly from Theorem \ref{thm03}.
\end{proof}

\begin{corollary}\label{cor AOT 04}
Let $(M,g,X)$ be a complete non-compact ambient obstruction soliton whose Ricci curvature satisfies ${\rm Ric}\leq-\alpha g$, for some positive constant $\alpha$. If $(M,g)$ has polynomial volume growth and $|X|,|\nabla X|\in L^\infty(M)$, then $M$ must be stationary and $\calO$-flat.
\end{corollary}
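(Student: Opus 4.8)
The plan is to recognize that this corollary is a direct specialization of Theorem \ref{thm04} to the case $q = \calO_n$, so that essentially all of the work reduces to verifying that the ambient obstruction tensor meets the hypotheses required there. First I would recall, as established earlier in this section, that $\calO_n$ is trace-free and divergence-free, and that under the standing assumption of constant scalar curvature the ambient obstruction soliton equation \eqref{soliton O} collapses to the $q$-soliton \eqref{q soliton} with $q = \calO_n$ (the $\Delta^{\frac{n}{2}-1}R$ correction term being killed by the constant-scalar-curvature hypothesis). With $q = \calO_n$ in hand, each hypothesis of Theorem \ref{thm04} is present: $(M,g,X)$ is complete and non-compact, $q$ is trace-free and divergence-free, $\ric \leq -\alpha g$ for some $\alpha > 0$, the manifold has polynomial volume growth, and $|X|, |\nabla X| \in L^\infty(M)$.

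Invoking Theorem \ref{thm04} then yields immediately that $M$ is stationary and $q$-flat, i.e.\ $\calO$-flat. For completeness I would also indicate the mechanism inherited from that theorem: arguing by contradiction, one supposes $M$ is not stationary and sets $u := |X|^2$ and $Y := \nabla |X|^2$. Since $q$ is trace-free and divergence-free, Lemma \ref{lemma01} reduces to $\tfrac{1}{2}\Delta |X|^2 = |\nabla X|^2 - \ric(X,X)$, and the curvature bound $\ric \leq -\alpha g$ upgrades the right-hand side to $\geq \alpha u$, giving $\tfrac{1}{2}{\rm div}\, Y \geq \alpha u$. One also has $g(\nabla u, Y) = |\nabla |X|^2|^2 \geq 0$, while Kato's inequality produces $|Y| \leq 2|X||\nabla X| < \infty$ from the $L^\infty$ bounds. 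The maximum principle for polynomial volume growth (\cite[Theorem 2.1.]{Caminha}) then forces $u \equiv 0$, the desired contradiction, after which $q$-flatness follows from the Quick Fact since $q$ is trace-free.

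The only point genuinely requiring care—rather than an analytic obstacle—is justifying the reduction to $q = \calO_n$: I would confirm that the constant-scalar-curvature hypothesis indeed annihilates the scalar-curvature correction term so that the ambient obstruction soliton really is a $q$-soliton for a trace-free, divergence-free tensor, and that $\calO$-flatness in this reduced setting is the intended conclusion. Once this bookkeeping is in place, no new estimates are needed and the result is immediate from Theorem \ref{thm04}.
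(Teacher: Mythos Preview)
Your proposal is correct and matches the paper's approach exactly: the paper's proof consists of the single sentence ``This follows directly from Theorem \ref{thm04},'' and your argument is precisely this specialization, with the added (and correct) verification that the standing constant-scalar-curvature assumption of the section reduces the ambient obstruction soliton to a $q$-soliton for $q=\calO_n$. Your recapitulation of the mechanism of Theorem \ref{thm04} is accurate but more detailed than the paper requires.
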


\begin{proof}
This follows directly from Theorem \ref{thm04}.
\end{proof}

\begin{corollary}\label{cor AOT stochastic}
Let $(M,g,X)$ be a stochastically complete ambient obstruction soliton whose Ricci curvature satisfies ${\rm Ric}\leq-\beta g$, for some positive constant $\beta$. If $|X|\in L^\infty(M)$, then $M$ must be stationary and  $\calO$-flat.
\end{corollary}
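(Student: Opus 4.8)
The plan is to recognize Corollary~\ref{cor AOT stochastic} as the specialization of Theorem~\ref{stochastic} to the tensor $q=\calO_n$. The discussion at the start of this section already reduces the ambient obstruction soliton equation \eqref{soliton O} to $\frac{1}{2}\mathcal{L}_Xg=\lambda g+\frac{1}{2}\calO_n$ once we impose constant scalar curvature, since the correction term $c_n(-1)^{n/2}(\Delta^{n/2-1}R)g$ then vanishes identically. So the first step is simply to observe that, under the standing constant scalar curvature assumption of the section, the given ambient obstruction soliton is genuinely a $q$-soliton in the sense of \eqref{q soliton} with $q=\calO_n$.

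Next I would verify that $q=\calO_n$ meets the hypotheses demanded by Theorem~\ref{stochastic}. The ambient obstruction tensor is trace-free and divergence-free in every even dimension $n\ge 4$ (this is one of its defining features, recorded earlier in the section), so the tensor condition is immediate. The remaining hypotheses of Theorem~\ref{stochastic} — stochastic completeness of $(M,g)$, the curvature bound $\ric\le -\beta g$ for some $\beta>0$, and $|X|\in L^\infty(M)$ — appear verbatim in the statement of the corollary, so matching them requires no further argument. With all hypotheses in place, I would invoke Theorem~\ref{stochastic} directly to conclude that $(M,g)$ is stationary and $q$-flat, i.e. $\calO_n\equiv 0$, which is exactly $\calO$-flatness.

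The proof is therefore essentially bookkeeping, and I expect no computational obstacle: the genuine content lives one level up, in the Bochner-type identity of Lemma~\ref{lemma01} and the weak maximum principle of Lemma~\ref{lema:PRS} that already power Theorem~\ref{stochastic}. The only point that deserves care is conceptual rather than technical, namely the constant scalar curvature reduction that licenses dropping the scalar term and treating $\calO_n$ as a divergence-free, trace-free $q$; since that reduction is performed once and for all at the beginning of the section, the corollary follows at once. One might additionally wish to confirm that the sign constraint $\ric\le -\beta g<0$ is compatible with the constant scalar curvature hypothesis so that the hypotheses are not vacuous, but this is a remark on nonemptiness of the example class rather than a step in the deduction itself.
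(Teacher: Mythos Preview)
Your proposal is correct and matches the paper's own proof, which simply states that the corollary follows directly from Theorem~\ref{stochastic}. You have just made explicit the bookkeeping the paper leaves implicit: the constant scalar curvature reduction that identifies the ambient obstruction soliton with a $q$-soliton for $q=\calO_n$, and the fact that $\calO_n$ is trace-free and divergence-free.
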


\begin{proof}
This follows directly from Theorem \ref{stochastic}.
\end{proof}

\begin{corollary}\label{cor AOT 05}
Let $(M, g, f)$ be a complete gradient ambient obstruction soliton with non-negative sectional curvature and the gradient of $f$ satisfies
\[ \int_M \vert \nabla f \vert ^{\frac{n}{n-1}} < \infty. \] 
Since $\calO$ is trace-free, $M$ is $\calO$-flat.
\end{corollary}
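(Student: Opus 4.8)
The plan is to recognize Corollary \ref{cor AOT 05} as a direct specialization of Theorem \ref{thm05} (together with its complete-case packaging in Theorem \ref{thm intro 02}) to the choice $q = \calO_n$. The only thing that genuinely needs to be checked is that the ambient obstruction tensor fits the hypotheses of the general theorem, after which the conclusion transfers verbatim. Since this section already operates under the standing assumption that $M$ has constant scalar curvature, the scalar-curvature correction term in \eqref{soliton O} vanishes, and the gradient soliton equation reduces to
\[ \hess f = \lambda g + \tfrac{1}{2}\calO_n, \]
which is exactly the gradient $q$-soliton equation \eqref{grad q soliton} with $q = \calO_n$.

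First I would record the two structural facts established earlier in the section: the ambient obstruction tensor $\calO_n$ is trace-free (indeed it is trace-free, divergence-free, and conformally invariant of weight $2-n$), and under constant scalar curvature the relevant tensor is precisely $q = \calO_n$. With $q$ trace-free, the trace-free hypothesis of Theorem \ref{thm05} is met; note that the divergence-free property is not even needed here, which is the whole point of restricting to the gradient case in that theorem. The remaining hypotheses---non-negative sectional curvature and $\int_M |\nabla f|^{\frac{n}{n-1}} < \infty$---are carried over unchanged from the statement of the corollary to the statement of the theorem.

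Next I would simply invoke Theorem \ref{thm05} in the non-compact setting to conclude that $M$ is $\calO_n$-flat, and then address completeness. Because the corollary is stated for a \emph{complete} (rather than merely non-compact) soliton, I would split into the compact and non-compact cases exactly as was done in passing from Theorem \ref{thm05} to Theorem \ref{thm intro 02}: the non-compact case is Theorem \ref{thm05} applied to $q=\calO_n$, while the compact case is covered by \cite[Proposition 3.12]{Griffin}. In either case $f$ is forced to be constant, hence $\lambda = 0$ and $\calO_n = 0$, so $M$ is $\calO$-flat.

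The step I expect to be ``hard'' is in fact only a bookkeeping matter rather than a mathematical obstacle: one must make sure the hypotheses of Theorem \ref{thm05} align cleanly with the ambient-obstruction setting, in particular that the constant-scalar-curvature reduction to $q=\calO_n$ is legitimate and that the trace-free property (the sole structural requirement) genuinely holds for $\calO_n$. Once those are confirmed, no new estimate or argument is required, and the proof is the one-line statement that the result follows directly from Theorem \ref{thm05}.
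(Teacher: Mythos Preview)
Your proposal is correct and follows essentially the same approach as the paper: the paper's proof is the single line ``This follows directly from Theorem \ref{thm intro 02},'' and you have simply unpacked that citation by verifying the constant-scalar-curvature reduction to $q=\calO_n$, the trace-free property, and the compact/non-compact split that underlies Theorem \ref{thm intro 02} via Theorem \ref{thm05} and \cite[Proposition 3.12]{Griffin}. Nothing in your argument deviates from the paper's route; you have just made explicit the bookkeeping that the paper leaves implicit in the section's standing assumptions.
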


\begin{proof}
This follows directly from Theorem \ref{thm intro 02}.
\end{proof}

\begin{corollary}\label{cor AOT 06}
Let $(M, g, f)$ be a non-expanding complete gradient ambient obstruction soliton with nonnegative potential function and such that $f\in L^p(M)$ for some $p>1$. Since $\calO$ is trace-free, $M$ is $\calO$-flat.
\end{corollary}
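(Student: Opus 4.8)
The plan is to recognize this as a direct specialization of Theorem \ref{thm06} to the tensor $q = \calO_n$, so that the bulk of the work is verifying the hypotheses of that theorem and then tracing through its short argument. Under the standing assumption of this section that $M$ has constant scalar curvature, the scalar term $c_n(-1)^{n/2}(\Delta^{n/2-1}R)g$ appearing in the soliton equation \eqref{soliton O} vanishes (since $\Delta^{\frac{n}{2}-1}R = 0$ when $R$ is constant), so the gradient soliton equation collapses to $\hess f = \lambda g + \frac{1}{2}\calO_n$, with $\calO_n$ trace-free.

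First I would take the trace of this reduced equation. Since $\calO_n$ is trace-free, this gives $\Delta f = \lambda n$. Because the soliton is assumed non-expanding, we have $\lambda \geq 0$, and hence $\Delta f = \lambda n \geq 0$, so that $f$ is subharmonic.

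Next, having arranged that $f$ is nonnegative, subharmonic, and lies in $L^p(M)$ for some $p > 1$ on the complete manifold $M$, I would invoke Yau's Lemma (Lemma \ref{lemma Yau}) to conclude that $f$ is constant. A constant potential forces $\hess f = 0$, whence $\lambda g + \frac{1}{2}\calO_n = 0$; taking the trace once more yields $\lambda n = 0$, so $\lambda = 0$ and therefore $\calO_n = 0$, i.e. $M$ is $\calO$-flat.

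The main subtlety here is structural rather than analytic: one must ensure that the tensor $q$ entering the soliton equation is genuinely trace-free, so that tracing annihilates the $q$ contribution and leaves only $\Delta f = \lambda n$. This is exactly what the constant-scalar-curvature reduction provides, since it strips the extra scalar term from the full obstruction soliton tensor and leaves $\calO_n$, which is trace-free by construction. Once that reduction is in place, no genuine obstacle remains, as the remaining steps are a routine application of the subharmonicity/Yau machinery already established earlier in the section.
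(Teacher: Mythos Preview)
Your proposal is correct and matches the paper's approach: the paper's proof is the single line ``This follows directly from Theorem~\ref{thm06},'' and you have simply unpacked that reference, correctly noting the standing constant-scalar-curvature assumption that reduces the soliton tensor to $\calO_n$ and then tracing through the short subharmonicity/Yau argument of Theorem~\ref{thm06}.
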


\begin{proof}
This follows directly from Theorem \ref{thm06}.
\end{proof}

\begin{corollary}\label{cor AOT 01 compact}
Let $(M, g, X)$ be a compact ambient obstruction soliton such that $X$ is a conformal vector field. Since $\calO$ is trace-free then $X$ is Killing and $(M,g)$ must be $\calO$-flat.
\end{corollary}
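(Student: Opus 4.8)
The plan is to apply Theorem \ref{thm01 compact} directly with the tensor $q = \calO_n$. Following the reduction established earlier in this section, in the constant scalar curvature setting the ambient obstruction soliton equation \eqref{soliton O} simplifies to the $q$-soliton equation $\frac{1}{2}\mathcal{L}_X g = \lambda g + \frac{1}{2}\calO_n$, so that $(M, g, X)$ is a compact $q$-soliton with $q = \calO_n$.

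Next I would verify the hypotheses of Theorem \ref{thm01 compact}. By assumption $X$ is a conformal vector field and $M$ is compact, and the ambient obstruction tensor $\calO_n$ is trace-free, so its trace is identically zero and in particular constant. Hence the first conclusion of Theorem \ref{thm01 compact} applies and $X$ is a Killing vector field. Since $\calO_n$ is moreover trace-free, the second conclusion of that theorem (equivalently, the Quick Fact applied to the now-Killing field $X$) gives that $(M, g)$ is $\calO$-flat, completing the argument.

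I do not expect any serious obstacle, since the entire content lies in Theorem \ref{thm01 compact}; all that must be checked is that the trace-free property of $\calO_n$ simultaneously supplies the ``constant trace'' hypothesis (needed to force $X$ Killing) and the ``trace-free'' hypothesis (needed to force $\calO$-flatness). The one point deserving a word of care is the standing constant scalar curvature assumption that lets us replace the full flow tensor $\calO_n + c_n(-1)^{n/2}(\Delta^{n/2 - 1}R)g$ by $\calO_n$ itself; this is precisely the convention under which every corollary in this section is stated.
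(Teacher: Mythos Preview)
Your proposal is correct and follows exactly the same route as the paper: both simply invoke Theorem \ref{thm01 compact} with $q=\calO_n$, using that $\calO_n$ is trace-free (hence in particular constant trace) and that the standing constant scalar curvature convention reduces the soliton equation to a $q$-soliton equation. Your write-up merely makes explicit the hypothesis check that the paper leaves implicit.
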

\begin{proof}
This follows directly from Theorem \ref{thm01 compact}.
\end{proof}

Note, this corollary is in fact a weaker version of \cite[Corollary 3.7]{Griffin}. Corollary 3.7 states that any compact ambient obstruction soliton is $\calO$-flat, so placing any sort of requirement on our vector field is unnecessary. 

%As discussed in Section \ref{Q Section}, \cite[Proposition 3.12]{Griffin} removes the need for our tensor to be divergence-free. The ambient obstruction tensor, however, is always divergence-free. 

\section{Applications to Cotton Solitons}\label{Cotton Section}
We continue to demonstrate the efficacy of our program by applying our results to the the Cotton flow in dimension $n=3$. In so doing, we see that we duplicate a number of results that were established by the first author and Silva Jr. in their upcoming paper, \cite{CN}. %Getting back to these previously established results reinforces the validity and value of our general results.

For this let us consider $g(t)$ to be a one-parameter family of metrics on a Riemannian manifold $(M^3,g)$. Using the definitions set forth in \cite{Kisisel}, the  {\em Cotton flow} is given by
\begin{equation}\label{Cotton flow}
\frac{\partial}{\partial t}g(t)= \kappa C_{g(t)},
\end{equation}
where $C_{g(t)}$ is the $(0,2)$-Cotton tensor of  $(M^3,g(t))$ given by \eqref{CY Eqn}. When $\kappa=1$ its solutions are called {\em Cotton solitons} and they are given by   
\begin{equation}\label{X}
C+\mathcal{L}_Xg=\lambda g,
\end{equation}
where $\mathcal{L}$ denotes the Lie derivative and $\lambda$ is a constant. Note that these solitons inherit the sign conventions on $\lambda$ set forth in Section \ref{intro section}. If $X$ is a Killing vector field the Cotton soliton is said to be {\em trivial}. Further, if $X=\nabla f$ for some smooth function $f:M\to\mathbb{R}$ on $M^3$, we get a {\em gradient Cotton soliton} and equation \eqref{X} becomes
\begin{equation}\label{Cotton solitons}
C+2{\rm Hess}f=\lambda g.
\end{equation}

Reconciling these soliton definitions with those of \eqref{q soliton} and \eqref{grad q soliton}, we see that we will need to let $q= -2C$. Functionally, this doesn't change anything because $-2C$ is still trace-free and divergence-free.

In  \cite{GR}  García-R\'io et al. showed that any compact Cotton soliton must be trivial. They also showed the existence of (complete) non-trivial Riemannian shrinking Cotton solitons on the Heisenberg group. More recently, in \cite{CN}, the first author and Silva Jr. studied complete non-compact Cotton solitons and they obtained triviality by assuming some integral condition or $L^p$ and $L^\infty$ regularity on the vector field $X$. Consequently, the reader should note that many of the following corollaries duplicate results in \cite{CN}. As such, the proofs will contain references to both the the appropriate result(s) in \cite{CN} and the theorem in Section \ref{Q Section} that they are a corollary to. For the reader interested in understanding the nuances that arise when examining Cotton solitons specifically, we recommend looking at the proofs given in \cite{CN}.

% \begin{corollary} [Cunha-Silva Jr.] \label{cor C 01} Let $(M^3,g,X)$ be an $3$-dimensional complete, non-compact Cotton soliton with non-positive Ricci curvature. If any one of the following conditions are met, then $(M^3,g)$ is stationary and Cotton flat:
% \begin{enumerate}
% \item \begin{equation}%\label{integral condition}
% \int_{M\diagdown B_r(x_0)}d(x,x_0)^{-2}|X|^2<\infty\,\,\,{\color{blue}{\text{and}}\,\,\,{\rm Ric}\geq-(n-1)\frac{k^2}{1+r^2}},
% \end{equation}
% {\color{blue}in the sense of quadratic forms}
% \item If $M^3$ is parabolic and $|X|\in L^\infty(M^3)$ ,
% \item If $|X|\in L^p(M^3)$ for some $p>1$.
% \end{enumerate}
% \end{corollary}

\begin{corollary} [Cunha-Silva Jr.] \label{cor C 012} Let $(M^3,g,X)$ be an $3$-dimensional complete, non-compact Cotton soliton with non-positive Ricci curvature. If $M$ is parabolic and $|X|\in L^\infty(M)$ or if $|X|\in L^p(M)$ for $p>1$, then $(M,g)$ is stationary and $q$-flat.
\end{corollary} 

\begin{proof}
This follows directly from Theorem \ref{thm012}. See also \cite[Theorems 4]{CN} for a more complete proof.
\end{proof}

\begin{corollary} [Cunha-Silva Jr.] \label{cor C 011} Let $(M^3,g,X)$ be an $3$-dimensional complete, non-compact Cotton soliton. Further, suppose the Ricci curvature of $(M,g,X)$ satisfies: 
\[-(n-1)\frac{k^2}{1+r^2} \leq \ric \leq 0\]
in the sense of quadratic forms. 
If $\int_{M\diagdown B_r(x_0)}d(x,x_0)^{-2}|X|^2<\infty$, then $(M,g)$ is stationary and $q$-flat.
\end{corollary}

\begin{proof}
This follows directly from Theorem \ref{thm011}. See also \cite[Theorems 3]{CN} for a more complete proof.
\end{proof}

\begin{corollary} [Cunha-Silva Jr.] \label{cor C 02}
Let $(M^3,g,f)$ be a complete  gradient Cotton soliton. If $|\nabla f|$ converges to zero at infinity, then $M^3$ must be stationary and Cotton flat.
\end{corollary}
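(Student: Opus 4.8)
The plan is to recognize the corollary as a direct specialization of Theorem \ref{thm02}. Recall from the opening of Section \ref{Cotton Section} that reconciling the gradient Cotton soliton equation \eqref{Cotton solitons} with the gradient $q$-soliton equation \eqref{grad q soliton} amounts to setting $q = -2C$, where $C$ denotes the $(0,2)$-Cotton tensor. Thus a complete gradient Cotton soliton $(M^3,g,f)$ is nothing but a complete gradient $q$-soliton for this particular choice of $q$, and the conclusion we want (stationary and Cotton flat) is exactly the conclusion of Theorem \ref{thm02} translated back through this identification.

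The one point to verify is that this $q$ meets the hypotheses of Theorem \ref{thm02}, namely that it is both trace-free and divergence-free. In dimension $n=3$ the $(0,2)$-Cotton tensor is trace-free and divergence-free, as recalled in Section \ref{intro section}; since scaling by the constant $-2$ preserves both properties, $q = -2C$ is trace-free and divergence-free as well. Combined with the standing assumptions that $(M^3,g,f)$ is complete and that $|\nabla f|$ converges to zero at infinity, every hypothesis of Theorem \ref{thm02} is in place. Note that, as observed just before Theorem \ref{thm02}, restricting to the gradient case is what lets us avoid imposing any curvature sign condition here.

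With this identification in hand I would simply invoke Theorem \ref{thm02} to conclude that $M^3$ is stationary and $q$-flat. Unwinding the definition, $q$-flatness means $-2C \equiv 0$, i.e. $C \equiv 0$, which is precisely the statement that the soliton is Cotton flat, while stationarity means $f$ is constant. Because the corollary follows from the general theorem by pure specialization, I do not anticipate any substantive obstacle; the only (minor) bookkeeping is the constant-coefficient matching that produces $q = -2C$ together with the observation that the trace-free and divergence-free conditions are scale-invariant. For a self-contained treatment tailored to the Cotton setting, one may instead consult the corresponding result in \cite{CN}.
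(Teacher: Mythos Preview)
Your proposal is correct and takes essentially the same approach as the paper: both recognize the corollary as a direct specialization of Theorem \ref{thm02} via the identification $q=-2C$, with the Cotton tensor being trace-free and divergence-free in dimension $3$, and both point to \cite{CN} for the detailed Cotton-specific treatment. Your write-up simply makes the bookkeeping explicit where the paper's proof is a one-line citation.
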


\begin{proof}
This follows directly from Theorem \ref{thm02}. See also \cite[Theorems 5]{CN} for a more complete proof.
\end{proof}

\begin{corollary} [Cunha-Silva Jr.] \label{cor C 03} 
Let $(M^3,g,X)$ be a complete Cotton soliton with non-positive Ricci curvature. If $|X|$ converges to zero at infinity, then $M^3$ must be stationary and Cotton-flat.
\end{corollary}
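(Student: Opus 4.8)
The plan is to deduce this statement directly from Theorem \ref{thm03} by recognizing the Cotton soliton as a $q$-soliton for an appropriate trace-free, divergence-free tensor $q$. The first step is the identification carried out at the start of Section \ref{Cotton Section}: rewriting the Cotton soliton equation \eqref{X} into the form of the general $q$-soliton equation \eqref{q soliton} amounts to setting $q=-2C$, where $C$ is the $(0,2)$-Cotton tensor of $(M^3,g)$. As recorded in Section \ref{intro section}, in dimension $n=3$ the tensor $C$ is both trace-free and divergence-free, and hence so is $q=-2C$. Thus $(M^3,g,X)$ is genuinely a complete soliton to the $q$-flow for a divergence-free, trace-free tensor.

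With this identification in hand, the second step is simply to check that the remaining hypotheses of Theorem \ref{thm03} are satisfied. By assumption $M^3$ is complete with non-positive Ricci curvature, and $|X|$ converges to zero at infinity; these are exactly the hypotheses required. Applying Theorem \ref{thm03} then yields at once that $M^3$ is stationary and $q$-flat.

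The final step is to translate the conclusion back into the language of Cotton solitons. Stationarity means $X$ is Killing, which is precisely the statement that the Cotton soliton is trivial; and $q$-flatness, since $q=-2C$, is equivalent to $C\equiv 0$, i.e. $(M^3,g)$ is Cotton-flat. Because this is a direct corollary, there is no genuine analytic obstacle to overcome: the substantive work is already packaged inside Theorem \ref{thm03}, which in turn rests on the maximum principle at infinity of Lemma \ref{lemma:pm at infinity}. The only point deserving a moment's care is the bookkeeping of the sign and scaling conventions relating the constant $\lambda$ in \eqref{X} to the $\lambda$ in \eqref{q soliton}; this is harmless here, since the trace-free structural equation forces $\lambda=0$ in either normalization. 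For the reader interested in the features specific to the Cotton setting, the more detailed, self-contained argument appears in \cite{CN}.
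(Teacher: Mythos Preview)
Your proposal is correct and takes essentially the same approach as the paper: the paper's own proof of this corollary is precisely the observation that the Cotton soliton is a $q$-soliton with $q=-2C$ trace-free and divergence-free, so that Theorem~\ref{thm03} applies directly (with a reference to \cite{CN} for the self-contained version). Your additional remarks on translating the conclusion back and on the harmlessness of the scaling conventions are accurate and simply make explicit what the paper leaves implicit.
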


\begin{proof}
This follows directly from Theorem \ref{thm03}. See also \cite[Theorems 6]{CN} for a more complete proof.
\end{proof}

\begin{corollary} [Cunha-Silva Jr.] \label{cor C 04}
Let $(M^3,g,X)$ be a complete non-compact Cotton soliton whose Ricci curvature satisfies ${\rm Ric}\leq-\alpha g$, for some positive constant $\alpha$. If $(M^3,g)$ has polynomial volume growth and $|X|,|\nabla X|\in L^\infty(M)$, then $M^3$ must be stationary and Cotton flat.
\end{corollary}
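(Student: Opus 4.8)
The plan is to recognize Corollary \ref{cor C 04} as a direct specialization of the general Theorem \ref{thm04} to the particular tensor arising in the Cotton setting. As noted in the reconciliation immediately following \eqref{Cotton solitons}, a Cotton soliton \eqref{X} is exactly a $q$-soliton in the sense of \eqref{q soliton} with the choice $q = -2C$, where $C$ denotes the $(0,2)$-Cotton (Cotton--York) tensor of $(M^3, g)$. So the first step is simply to make this identification explicit and to confirm that every hypothesis of Theorem \ref{thm04} is inherited by this $q$.

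The second step is to check the tensorial hypotheses. It was recalled in Section \ref{intro section} that the $(0,2)$-Cotton tensor is trace-free and divergence-free; since scaling by the constant $-2$ preserves both properties, $q = -2C$ is a trace-free, divergence-free symmetric two-tensor, which is precisely what Theorem \ref{thm04} demands. The remaining hypotheses transfer verbatim: the curvature assumption ${\rm Ric} \leq -\alpha g$ for some $\alpha > 0$, the polynomial volume growth of $(M^3, g)$, and the bounds $|X|, |\nabla X| \in L^\infty(M)$ are stated identically in the corollary and in the theorem. Hence all conditions of Theorem \ref{thm04} are met.

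The final step is to invoke Theorem \ref{thm04} to conclude that $M^3$ is stationary and $q$-flat, and then to translate $q$-flatness back into the Cotton language: $q = 0$ if and only if $C = 0$, i.e.\ $q$-flat is synonymous with Cotton flat in dimension three. This yields both assertions of the corollary. I anticipate no genuine obstacle here, since the substantive analytic work—the Bochner-type identity of Lemma \ref{lemma01} combined with the maximum principle of \cite[Theorem 2.1]{Caminha} via Kato's inequality—is already packaged inside Theorem \ref{thm04}; the only thing to verify is that the dimension-three Cotton tensor supplies an admissible $q$, which it does. Accordingly the proof reduces to a single line citing Theorem \ref{thm04}, with a parallel reference to \cite{CN} for the specialized treatment, matching the format of Corollaries \ref{cor C 01}--\ref{cor C 03}.
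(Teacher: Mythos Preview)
Your proposal is correct and matches the paper's own proof exactly: the paper simply states that the corollary follows directly from Theorem \ref{thm04}, with a parallel reference to \cite[Theorem 7]{CN} for the specialized treatment. Your explicit verification that $q=-2C$ inherits the trace-free and divergence-free properties and that the remaining hypotheses transfer verbatim is precisely the content behind that one-line citation.
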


\begin{proof}
This follows directly from Theorem \ref{thm04}. See also \cite[Theorem 7]{CN} for a more complete proof.
\end{proof}

\begin{corollary} [Cunha-Silva Jr.] \label{cor C stochastic}
Let $(M^3,g,X)$ be a stochastically complete Cotton soliton whose Ricci curvature satisfies ${\rm Ric}\leq-\beta g$, for some positive constant $\beta$. If $|X|\in L^\infty(M^3)$, then $(M^3,g)$ must be stationary and  Cotton-flat.
\end{corollary}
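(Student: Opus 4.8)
The plan is to obtain this statement as a direct specialization of Theorem \ref{stochastic} to the tensor $q = -2C$, where $C$ denotes the $(0,2)$-Cotton tensor in dimension three. First I would recast the Cotton soliton equation \eqref{X}, namely $C + \mathcal{L}_X g = \lambda g$, into the form $\frac{1}{2}\mathcal{L}_X g = \lambda g + \frac{1}{2}(-2C)$. This identifies a Cotton soliton as a $q$-soliton in the sense of \eqref{q soliton} with $q = -2C$, precisely the reconciliation recorded at the start of Section \ref{Cotton Section}.

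Next I would check that every hypothesis of Theorem \ref{stochastic} is supplied. As recalled in Section \ref{intro section}, the $(0,2)$-Cotton tensor is both trace-free and divergence-free when $n = 3$, and multiplication by the constant $-2$ preserves both properties, so $q = -2C$ meets the tensor requirements. The remaining hypotheses, namely stochastic completeness of $M^3$, the curvature bound ${\rm Ric} \leq -\beta g$, and $|X| \in L^\infty(M^3)$, are assumed verbatim in the statement. Invoking Theorem \ref{stochastic} then gives that $(M^3, g)$ is stationary and $q$-flat; since $q = -2C$, $q$-flatness is the same as Cotton-flatness, completing the argument.

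Because this is a pure specialization, I do not expect any genuine obstacle. Should one wish to avoid the black-box citation, the proof of Theorem \ref{stochastic} can be reproduced directly in this setting: Lemma \ref{lemma01} applied to $q = -2C$ yields $\frac{1}{2}\Delta|X|^2 = |\nabla X|^2 - {\rm Ric}(X,X)$, and then Kato's inequality together with ${\rm Ric} \leq -\beta g$ produces the differential inequality $\Delta|X| \geq \beta|X|$, whereupon Lemma \ref{lema:PRS} forces $\sup_M|X| = 0$. The only point requiring care anywhere is the bookkeeping of the factor $-2$ in the identification $q = -2C$ and the observation that trace-freeness and divergence-freeness survive constant scaling. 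For the reader interested in the setting-specific details, I would also cite the corresponding result in the forthcoming paper of Cunha and Silva Jr., which by the numbering of the preceding corollaries should be \cite[Theorem 8]{CN}.
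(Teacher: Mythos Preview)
Your proposal is correct and matches the paper's own proof exactly: the paper simply states that the corollary follows directly from Theorem \ref{stochastic} and refers to \cite[Theorem 8]{CN} for details. Your additional unpacking of the specialization $q=-2C$ and the optional reproduction of the argument via Lemma \ref{lemma01}, Kato's inequality, and Lemma \ref{lema:PRS} are faithful to the underlying reasoning.
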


\begin{proof}
This follows directly from Theorem \ref{stochastic}. See also \cite[Theorem 8]{CN} for a more complete proof.
\end{proof}

\begin{corollary}\label{cor C 05}
Let $(M^3, g, f)$ be a complete gradient Cotton soliton with non-negative sectional curvature and the gradient of $f$ satisfies
\[ \int_M \vert \nabla f \vert ^{\frac{3}{2}} < \infty. \] 
Since the Cotton tensor is trace-free, $M^3$ is Cotton flat.
\end{corollary}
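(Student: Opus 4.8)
The plan is to obtain this corollary as the dimension-three instance of Theorem~\ref{thm intro 02} (whose non-compact half is Theorem~\ref{thm05}) applied to the Cotton tensor. The first step is to rewrite the gradient Cotton soliton equation \eqref{Cotton solitons} in the form of the gradient $q$-soliton equation \eqref{grad q soliton}: following the identification $q=-2C$ recorded in the preamble to this section, a gradient Cotton soliton is precisely a gradient $q$-soliton for $q=-2C$. Since the $(0,2)$-Cotton tensor $C$ is trace-free in dimension $3$, so is $q=-2C$, and hence the trace-free hypothesis of Theorem~\ref{thm intro 02} holds automatically.

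Next I would check that the remaining hypotheses transfer verbatim. Completeness and non-negative sectional curvature are assumed in the corollary and are exactly the curvature hypotheses required. For the integrability condition, I would note that with $n=3$ the exponent $\frac{n}{n-1}$ equals $\frac{3}{2}$, so the stated bound $\int_M|\nabla f|^{3/2}<\infty$ is literally $\int_M|\nabla f|^{n/(n-1)}<\infty$. Thus every hypothesis of Theorem~\ref{thm intro 02} is satisfied, and invoking it yields that $M^3$ is $q$-flat; unwinding $q=-2C$ gives $C=0$, that is, $M^3$ is Cotton flat, which is the desired conclusion.

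Should a self-contained argument be preferred over a direct citation, the underlying mechanism is brief: tracing \eqref{Cotton solitons} (equivalently \eqref{grad q soliton} with $q$ trace-free) gives $\Delta f=\lambda n$, so $f$ or $-f$ is subharmonic according to the sign of $\lambda$; Karp's result \cite[Corollary~2']{Karp} then forces $f$ to be constant under the combination of non-negative sectional curvature and the $L^{3/2}$ bound on $|\nabla f|$. With $f$ constant one has $\hess f=0$, and tracing \eqref{grad q soliton} gives $\lambda=0$, so the soliton equation collapses to $q=0$. Because the whole argument is this mechanical reduction to an already-established theorem, I do not expect any genuine obstacle; the only point meriting a moment of care is the sign-and-coefficient bookkeeping in the identification $q=-2C$, which in any case does not affect the final vanishing conclusion.
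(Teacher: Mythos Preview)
Your proposal is correct and follows the same approach as the paper: the paper's proof consists of the single line ``This follows directly from Theorem~\ref{thm intro 02},'' and you have carried out exactly that reduction, verifying that the identification $q=-2C$ places the Cotton soliton in the $q$-soliton framework with $q$ trace-free and that the exponent $\frac{n}{n-1}$ specializes to $\frac{3}{2}$ when $n=3$. Your optional self-contained paragraph simply unpacks the proof of Theorem~\ref{thm05}, so there is no divergence in method.
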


\begin{proof}
This follows directly from Theorem \ref{thm intro 02}.
\end{proof}

\begin{corollary}\label{cor C 06}
Let $(M^3, g, f)$ be a non-expanding complete gradient Cotton soliton with nonnegative potential function and such that $f\in L^p(M^3)$ for some $p>1$. Since the Cotton tensor is trace-free, $M^3$ is Cotton flat.
\end{corollary}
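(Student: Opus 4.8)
The plan is to obtain this as a direct consequence of Theorem \ref{thm06}, so the only real work is checking that the Cotton-soliton hypotheses translate into those of that theorem. Recall from the reconciliation carried out around \eqref{Cotton solitons} that a gradient Cotton soliton in dimension $n=3$ is exactly a gradient $q$-soliton in the sense of \eqref{grad q soliton}, where $q$ is a fixed constant multiple of the $(0,2)$-Cotton tensor $C$ (the normalization $q=-2C$ being recorded earlier in this section). Since the Cotton tensor is trace-free, this $q$ is trace-free as well, which is the only structural assumption Theorem \ref{thm06} places on the tensor.

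Next I would verify that the remaining hypotheses match verbatim. By assumption $(M^3,g,f)$ is complete, $f$ is nonnegative, and $f\in L^p(M^3)$ for some $p>1$, precisely as Theorem \ref{thm06} requires. The only point meriting a line of computation is the non-expanding condition: tracing \eqref{Cotton solitons} and using ${\rm tr}(C)=0$ gives $2\Delta f=\lambda n$, so the Cotton-soliton constant $\lambda$ in \eqref{X} and the constant of the associated $q$-soliton share a sign. Thus ``non-expanding'' in the Cotton normalization coincides with ``non-expanding'' in the $q$-soliton framework, and the hypothesis $\lambda\geq 0$ transfers without change.

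With these identifications in place, I would simply invoke Theorem \ref{thm06}. Concretely, the trace computation above shows $\Delta f\geq 0$, so $f$ is a nonnegative subharmonic function lying in $L^p(M^3)$ for some $p>1$; Yau's Lemma \ref{lemma Yau} then forces $f$ to be constant. Hence $\hess f=0$ and, tracing once more, $\lambda=0$, so the soliton equation collapses to $C\equiv 0$, i.e. $M^3$ is Cotton flat. I do not expect any genuine obstacle here: the entire content is the bookkeeping that passes between the Cotton-soliton normalization \eqref{X} and \eqref{Cotton solitons} and the general $q$-soliton equations, after which Theorem \ref{thm06} applies unchanged.
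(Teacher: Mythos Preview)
Your proposal is correct and matches the paper's own proof, which simply states that the result follows directly from Theorem \ref{thm06}. Your additional bookkeeping---verifying that the Cotton normalization $q=-2C$ preserves trace-freeness and that the sign of $\lambda$ (hence ``non-expanding'') transfers unchanged---is a careful expansion of what the paper leaves implicit.
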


\begin{proof}
This follows directly from Theorem \ref{thm06}.
\end{proof}

In the compact case, Garcia-R\'io et al. showed in \cite[Theorem 1]{GR} that any compact Cotton soliton $(M^3,g)$ must be Cotton flat and $X$ is a Killing vector field. That said, a corollary of Theorem \ref{thm01 compact} is unnecessary. 

% \begin{corollary}\label{cor C 01 compact}
% Let $(M^3, g, X)$ be a compact Cotton soliton such that $X$ is a conformal vector field. Since the Cotton tensor is trace-free, $X$ is Killing and $(M^3,g)$ is Cotton flat.\blue{This corollary was done by Garcia-Rio et al. in \cite[Theorem 1]{GR} without conformal condition. So, we can remove it!}
% \end{corollary}
% \begin{proof}
% This follows directly from Theorem \ref{thm01 compact}
% \end{proof}

\section{Applications to Bach Solitons}\label{Bach Section}

Continuing our investigation of the Bach flow, we will use a slight change in conventions to align with the current literature on the Bach flow as it exists separately from the ambient obstruction flow. That is, we will use the conventions set forth by \cite{CLM}, \cite{Ho}, \cite{Kar}, and references therein.

We consider the {\em Bach flow} as defined in \cite{Ho}:
\begin{equation}\label{Bach flow}
\frac{\partial}{\partial t}g_{ij}=-B_{ij},
\end{equation}
where $B_{ij}$ is given by \eqref{Bach n Eqn} and $g(0)=h$. Following the conventions of \cite{Shin} we see that {\em Bach solitons} are metrics satisfying by the equation: 
\begin{equation} \label{Bach soliton}
    B + \mathcal{L}_X g = \lambda g
\end{equation} 
for a vector field $X$. (It should be noted that this definition differs slightly from that used in \cite{Ho}.) As in \eqref{grad q soliton}, when $X=\nabla f$ the equation \eqref{Bach soliton} becomes
\begin{equation} \label{grad Bach soliton}
    B+ 2\hess f = \lambda g
\end{equation} 

Note that we get equations \eqref{Bach soliton} and \eqref{grad Bach soliton} by setting $q=-2B$ in equations \eqref{q soliton} and \eqref{grad q soliton}, respectively. As with the Cotton tensor in Section \ref{Cotton Section}, this rescaling does not interfere with our applications of our results to the Bach tensor. We will, however, need to make specifications 

As mentioned in Section \ref{intro section}, the Bach tensor is a trace-free tensor in all dimensions. In dimension $n=4$ it is also divergence-free, but this is not necessarily the case in dimension $n\geq 5$. However, if $M$ is a Riemannian manifold with harmonic Weyl curvature then the Cotton tensor vanishes $C_{ijk}=0$ and we can conclude from \cite[Lemma 5.1]{Cao} that in dimension $n\geq5$ the Bach tensor is divergence-free on such an $M$. Because our results in Section \ref{Q Section} rely on one or both of these properties, it should be noted that $q=-2B$ is also divergence-free and trace-free in the appropriate setting. Using these properties we are able to apply our general results to the Bach flow.

% \begin{corollary}\label{cor02}
% Let $(M,g,X)$ be an $(n\geq 5)$-dimensional complete and non-compact soliton to the Bach flow \eqref{Bach flow} with harmonic Weyl curvature, satisfying \eqref{integral condition} and such that it has non-positive Ricci curvature. Then, $X$ is Killing, and $M$ is Bach-flat.
% \end{corollary}
% 
% We begin by first noting that, for a four-dimensional Bach soliton or an $n$-dimensional ($n\geq 5$) Bach soliton with harmonic Weyl curvature, Lemma \ref{lemma01} can be reduced to:
% \[ \frac{1}{2}\Delta|X|^2-|\nabla X|^2+{\rm Ric}(X,X)=0.\]

To give the reader a more complete picture of Bach solitons, we also point out that, via \cite[Theorem 3.4]{Ho}, for $n=4$ any gradient Bach soliton with strictly positive or strictly negative Ricci curvature is Bach-flat. Furthermore, a number of results below duplicate results found in an upcoming paper of the first author, et al. (\cite{CLM}). As such, where applicable, our proofs contain references to both the appropriate result in \cite{CLM} and the results from Section \ref{Q Section}. To the reader interested in getting a more clear pictures of Bach solitons and understanding how the general results of this paper manifest in that context, we highly recommend \cite{CLM}, \cite{CLL} and \cite{Ho}.

% \begin{corollary}[Cunha, et. al.]\label{cor B 01} 
% Let $(M,g,X)$ be an $n$-dimensional complete, non-compact Bach soliton with non-positive Ricci curvature. Additionally, for $n \geq 5$, we stipulate that M also has harmonic Weyl curvature. If any one of the following conditions are met, then $(M,g)$ is stationary and Bach-flat:
% \begin{enumerate}
% \item \begin{equation}%\label{integral condition}
% \int_{M\diagdown B_r(x_0)}d(x,x_0)^{-2}|X|^2<\infty\,\,\,{\color{blue}{\text{and}}\,\,\,{\rm Ric}\geq-(n-1)\frac{k^2}{1+r^2}},
% \end{equation}
% {\color{blue}in the sense of quadratic forms}
% \item If $M$ is parabolic and $|X|\in L^\infty(M)$ ,
% \item If $|X|\in L^p(M)$ for some $p>1$.
% \end{enumerate}
% \end{corollary}

\begin{corollary}[Cunha, et. al.]\label{cor B 012} Let $(M,g,X)$ be an $n$-dimensional complete, non-compact Bach soliton with non-positive Ricci curvature. Additionally, for $n \geq 5$, we stipulate that M also has harmonic Weyl curvature. If $M$ is parabolic and $|X|\in L^\infty(M)$ or if $|X|\in L^p(M)$ for $p>1$, then $(M,g)$ is stationary and $q$-flat.
\end{corollary}

\begin{proof}
This follows directly from Theorem \ref{thm012}. See also \cite[Theorems 4-7]{CLM} for a more complete proof.
\end{proof}

Again, we see that we can generalize this statement to the complete case by considering the compact case as in \cite[Lemma 3.6]{Griffin}. That is, any complete Bach soliton satisfying the appropriate curvature conditions and one of the three listed conditions is Bach-flat. 

\begin{corollary}[Cunha, et. al.]\label{cor B 011}  Let $(M,g,X)$ be an $n$-dimensional complete, non-compact Bach soliton. Additionally, for $n \geq 5$, we stipulate that M also has harmonic Weyl curvature. Further, suppose the Ricci curvature of $(M,g,X)$ satisfies: 
\[-(n-1)\frac{k^2}{1+r^2} \leq \ric \leq 0\]
in the sense of quadratic forms. 
If $\int_{M\diagdown B_r(x_0)}d(x,x_0)^{-2}|X|^2<\infty$, then $(M,g)$ is stationary and $q$-flat.
\end{corollary}

\begin{proof}
This follows directly from Theorem \ref{thm011}. See also \cite[Theorems 4-7]{CLM} for a more complete proof.
\end{proof}

\begin{corollary}[Cunha, et. al.]\label{cor B 02}
Let $(M,g,f)$ be a complete gradient Bach soliton. Additionally, for $n \geq 5$, we stipulate that M also has harmonic Weyl curvature. If $|\nabla f|$ converges to zero at infinity, then $M$ must be stationary and Bach-flat.
\end{corollary}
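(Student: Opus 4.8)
The plan is to recognize the gradient Bach soliton as a gradient $q$-soliton with $q = -2B$ and then invoke Theorem \ref{thm02} directly. As noted following \eqref{grad Bach soliton}, setting $q = -2B$ in \eqref{grad q soliton} turns the gradient $q$-soliton equation into the gradient Bach soliton equation, so $(M,g,f)$ is precisely a gradient $q$-soliton with $q = -2B$. This is the same reduction used for the Cotton soliton corollaries in Section \ref{Cotton Section} and for the ambient obstruction corollaries in Section \ref{AOT Section}.

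To apply Theorem \ref{thm02}, I would verify that $q = -2B$ satisfies the two structural hypotheses on the tensor: being trace-free and divergence-free. The Bach tensor is trace-free in every dimension $n \geq 4$, so $-2B$ is trace-free immediately. For the divergence-free condition, the case $n = 4$ is automatic, since the Bach tensor is well known to be divergence-free there. For $n \geq 5$, I would lean on the harmonic Weyl curvature hypothesis: when $\delta W = 0$ the Cotton tensor $C_{ijk}$ vanishes, and \cite[Lemma 5.1]{Cao} then yields that $B$ is divergence-free. Hence $q = -2B$ is divergence-free in exactly the setting stipulated by the corollary.

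With both conditions on $q$ in hand and the assumption that $|\nabla f|$ converges to zero at infinity, the hypotheses of Theorem \ref{thm02} are met verbatim, and that theorem gives that $M$ is stationary and $q$-flat. Since $q = -2B$, being $q$-flat is equivalent to being Bach-flat, which completes the argument.

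The step carrying essentially all of the content is the verification of the divergence-free property for $n \geq 5$; the rest is bookkeeping about the convention $q = -2B$ together with an appeal to the general theorem. There is no genuine analytic obstacle here beyond what Theorem \ref{thm02} already handles, so the only place to exercise care is ensuring the harmonic Weyl curvature assumption is invoked precisely in the regime $n \geq 5$ where it is needed. As with the preceding corollaries, I would also cite \cite{CLM} for the specialized proof worked out directly in the Bach setting.
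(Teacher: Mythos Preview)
Your proposal is correct and matches the paper's approach exactly: the paper's proof simply states that the result follows directly from Theorem \ref{thm02}, and your argument supplies precisely the bookkeeping (that $q=-2B$ is trace-free in all dimensions and divergence-free either automatically for $n=4$ or via harmonic Weyl curvature and \cite[Lemma 5.1]{Cao} for $n\geq 5$) needed to justify that appeal. The only minor discrepancy is that the paper cites \cite{CLL} rather than \cite{CLM} for the specialized Bach-setting proof.
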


\begin{proof}
This follows directly from Theorem \ref{thm02}. See also \cite[Corollary 1]{CLL} for a more complete proof.
\end{proof}

\begin{corollary}[Cunha, et. al.]\label{cor B 03}
Let $(M,g,X)$ be a complete Bach soliton with non-positive Ricci curvature. Additionally, for $n \geq 5$, we stipulate that M also has harmonic Weyl curvature. If $|X|$ converges to zero at infinity, then $M$ must be stationary and Bach-flat.
\end{corollary}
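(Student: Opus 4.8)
The plan is to recognize this corollary as a direct specialization of Theorem \ref{thm03} to the tensor $q = -2B$. As noted in the preamble to this section, setting $q = -2B$ transforms the Bach soliton equation \eqref{Bach soliton} into the $q$-soliton equation \eqref{q soliton}, so that $(M,g,X)$ is genuinely a soliton to the $q$-flow in the sense of Section \ref{Q Section}. The only work, then, is to confirm that this particular $q$ satisfies the two hypotheses that Theorem \ref{thm03} imposes on the tensor, namely that it is trace-free and divergence-free, after which the remaining curvature and decay assumptions transfer verbatim.

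First I would record that $q = -2B$ is trace-free. The Bach tensor is trace-free in every dimension $n \geq 4$, and scaling by the constant $-2$ preserves this, so ${\rm tr}(q) = 0$ with no further assumptions.

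Next I would handle the divergence-free condition, which is the only place where the dimensional hypotheses actually enter. In dimension $n = 4$ the Bach tensor is divergence-free unconditionally, hence so is $q$. For $n \geq 5$ this is no longer automatic; here the hypothesis that $M$ has harmonic Weyl curvature ($\delta W = 0$) forces the Cotton tensor to vanish, and by \cite[Lemma 5.1]{Cao} the Bach tensor is then divergence-free on $M$. In either case ${\rm div}(q) = -2\,{\rm div}(B) = 0$. This verification is the substantive content of the corollary; everything else is bookkeeping.

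With $q = -2B$ now known to be trace-free and divergence-free, the hypotheses of Theorem \ref{thm03} are met: $(M,g,X)$ is a complete $q$-soliton, its Ricci curvature is non-positive, and $|X|$ converges to zero at infinity. Applying Theorem \ref{thm03} directly gives that $M$ is stationary and $q$-flat. Since $q = -2B$, being $q$-flat is equivalent to $B \equiv 0$, that is, $M$ is Bach-flat, completing the argument. I expect no genuine obstacle: the entire difficulty has been front-loaded into the divergence-free check for $n \geq 5$, and that is already supplied by the harmonic Weyl hypothesis together with \cite[Lemma 5.1]{Cao}.
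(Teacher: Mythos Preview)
Your proposal is correct and matches the paper's own proof, which simply states that the result follows directly from Theorem \ref{thm03}. You have supplied the verification that $q=-2B$ is trace-free and divergence-free (the latter requiring harmonic Weyl curvature for $n\geq 5$ via \cite[Lemma~5.1]{Cao}), which the paper leaves implicit from its earlier discussion; this is exactly the intended reduction.
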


\begin{proof}
This follows directly from Theorem \ref{thm03}. See also \cite[Theorems 3-4]{CLL} for a more complete proof.
\end{proof}

\begin{corollary}[Cunha, et. al.]\label{cor B 04}
Let $(M,g,X)$ be a complete non-compact Bach soliton whose Ricci curvature satisfies ${\rm Ric}\leq-\alpha g$, for some positive constant $\alpha$. Additionally, for $n \geq 5$, we stipulate that M also has harmonic Weyl curvature. If $(M,g)$ has polynomial volume growth and $|X|,|\nabla X|\in L^\infty(M)$, then $M$ must be stationary and Bach-flat.
\end{corollary}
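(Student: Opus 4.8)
The plan is to recognize this as a direct specialization of Theorem \ref{thm04} to the tensor $q = -2B$, so the entire strategy consists of verifying that the hypotheses of that theorem are met in the Bach setting and then quoting it. First I would record that $q = -2B$ is trace-free in all dimensions $n \geq 4$, as already noted in Section \ref{Bach Section}. Second, I would address the divergence-free requirement, which is the only point where the dimension enters: in dimension $n = 4$ the Bach tensor is divergence-free unconditionally, while for $n \geq 5$ the stipulated harmonic Weyl curvature hypothesis ($\delta W = 0$, equivalently $C_{ijk} = 0$) lets us invoke \cite[Lemma 5.1]{Cao} to conclude $\mathrm{div}(B) = 0$, hence $\mathrm{div}(q) = 0$ as well. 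This splits cleanly into the two cases and dispatches the only nontrivial structural check.

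With $q = -2B$ confirmed to be trace-free and divergence-free, the remaining hypotheses of Theorem \ref{thm04} transfer verbatim: the soliton is complete and non-compact, the Ricci curvature satisfies $\mathrm{Ric} \leq -\alpha g$ for some positive constant $\alpha$, the manifold has polynomial volume growth, and $|X|, |\nabla X| \in L^\infty(M)$. Theorem \ref{thm04} then yields directly that $M$ is stationary and $q$-flat, and since $q = -2B$, being $q$-flat is exactly being Bach-flat. The conclusion follows.

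I do not anticipate any genuine obstacle here, since the analytic content—the Bochner-type identity of Lemma \ref{lemma01} combined with the Caminha-type maximum principle of \cite[Theorem 2.1]{Caminha}—is entirely absorbed into Theorem \ref{thm04}. The one place demanding care is the $n \geq 5$ divergence-free argument, so I would state explicitly that harmonic Weyl curvature forces the Cotton tensor to vanish and cite \cite{Cao} for the resulting divergence-free property of $B$; this is the sole reason the corollary carries an extra hypothesis beyond the general theorem. For completeness and consistency with the surrounding corollaries, I would also reference \cite[Theorems 3-4]{CLL} (or the analogous result in \cite{CLM}) so that the interested reader can compare the direct Bach-specific proof with this corollary-style derivation.
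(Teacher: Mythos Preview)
Your proposal is correct and matches the paper's own approach: the paper's proof simply states that the result follows directly from Theorem~\ref{thm04} and points to \cite{CLL} for the Bach-specific argument. Your additional explicit verification that $q=-2B$ is trace-free and divergence-free (using \cite[Lemma~5.1]{Cao} for $n\geq 5$) is exactly the discussion the paper places earlier in Section~\ref{Bach Section}, so nothing is missing; the only discrepancy is the external theorem numbering in \cite{CLL}, where the paper cites Theorems~6--7 rather than 3--4.
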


\begin{proof}
This follows directly from Theorem \ref{thm04}. See also \cite[Theorems 6-7]{CLL} for a more complete proof.
\end{proof}

\begin{corollary}[Cunha, et. al.]\label{cor B stochastic}
Let $(M,g,X)$ be a stochastically complete Bach soliton such that Ricci curvature satisfies ${\rm Ric}\leq-\beta g$, for some positive constant $\beta$. Additionally, for $n \geq 5$, we stipulate that M also has harmonic Weyl curvature. If $|X|\in L^\infty(M)$, then $(M,g)$ must be stationary and Bach-flat.
\end{corollary}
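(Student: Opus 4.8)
The plan is to obtain this corollary as a direct specialization of Theorem \ref{stochastic} to the tensor $q = -2B$. As noted at the beginning of Section \ref{Bach Section}, the Bach soliton equation \eqref{Bach soliton} is exactly the $q$-soliton equation \eqref{q soliton} under the substitution $q = -2B$, and multiplying the Bach tensor by the constant $-2$ leaves its trace-free and divergence-free properties intact. Hence the whole argument reduces to verifying that $q = -2B$ meets the hypotheses of Theorem \ref{stochastic} and then reading off its conclusion.

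The one step that requires care is confirming that $q = -2B$ is both trace-free and divergence-free in the relevant dimensions. The Bach tensor is trace-free for every $n \geq 4$, so the trace-free condition is immediate. For the divergence-free condition I would split into cases: when $n = 4$ the Bach tensor is divergence-free with no further assumption, while for $n \geq 5$ the harmonic Weyl curvature hypothesis ($\delta W = 0$) forces the Cotton tensor to vanish, and \cite[Lemma 5.1]{Cao} then yields ${\rm div}\, B = 0$. This is precisely the role played by the extra stipulation in the statement, and it is the only place where the dimension enters the proof.

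With $q = -2B$ confirmed trace-free and divergence-free, the remaining hypotheses of Theorem \ref{stochastic}---stochastic completeness of $(M,g)$, the curvature bound ${\rm Ric} \leq -\beta g$ for a positive constant $\beta$, and $|X| \in L^\infty(M)$---coincide verbatim with the hypotheses of the corollary. Applying Theorem \ref{stochastic} therefore gives that $(M,g)$ is stationary and $q$-flat; since $q = -2B$, being $q$-flat is identical to being Bach-flat, completing the argument. There is no genuine analytic obstacle here: the content of the result lives entirely in Theorem \ref{stochastic} (whose proof runs through Lemma \ref{lemma01}, Kato's inequality, and the Omori--Yau characterization of stochastic completeness in Lemma \ref{lema:PRS}), so the only thing to watch is the dimension-dependent verification of the divergence-free property. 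For completeness I would also cite the parallel treatment in the forthcoming work \cite{CLL} so that the reader can compare the specialized argument.
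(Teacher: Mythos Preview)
Your proposal is correct and matches the paper's own approach: the corollary is obtained by applying Theorem \ref{stochastic} with $q=-2B$, after noting that the Bach tensor is trace-free in all dimensions $n\geq 4$ and divergence-free either automatically ($n=4$) or via the harmonic Weyl curvature assumption and \cite[Lemma 5.1]{Cao} ($n\geq 5$). The paper's proof is in fact terser than yours, simply citing Theorem \ref{stochastic} and \cite{CLL}, so your added verification of the divergence-free property and the reference to \cite{CLL} are entirely in keeping with the intended argument.
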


\begin{proof}
This follows directly from Theorem \ref{stochastic}. See also \cite[Theorems 9-10]{CLL} for a more complete proof.
\end{proof}

Proceeding to examine the corollaries for results that only required that $q$ be trace-free, we remind the reader that the Bach tensor is trace-free in all dimensions, As such we are able to drop the requirement on the Weyl curvature for $n\geq 5$ and apply the following results to a more general collection of Bach solitons.

\begin{corollary}\label{cor B 05}
Let $(M, g, f)$ be a complete gradient Bach soliton with non-negative sectional curvature and the gradient of $f$ satisfies
\[ \int_M \vert \nabla f \vert ^{\frac{n}{n-1}} < \infty. \] 
Since $B$ is trace-free, $M$ is Bach-flat.
\end{corollary}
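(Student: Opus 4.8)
The plan is to recognize Corollary \ref{cor B 05} as an immediate consequence of Theorem \ref{thm intro 02}, whose crucial feature is that it asks only for $q$ to be \emph{trace-free}, with no divergence-free hypothesis. This is exactly what lets us drop, in this setting, the harmonic Weyl curvature assumption that was required in Corollaries \ref{cor B 01}--\ref{cor B stochastic}, since those rested on theorems needing $q$ to be divergence-free. So the first thing I would do is emphasize that, because $n \geq 5$ does not force $B$ to be divergence-free, we are genuinely exploiting the trace-free-only strength of Theorem \ref{thm intro 02} rather than the divergence-free machinery of Section \ref{Q Section}.

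Next I would carry out the routine identification of a gradient Bach soliton with a gradient $q$-soliton. Recall from Section \ref{intro section} that the Bach tensor $B$ is trace-free in every dimension $n \geq 4$; hence the rescaled tensor $q = -2B$ is also trace-free, as noted at the start of this section. Moreover, the defining equation \eqref{grad Bach soliton} is obtained from \eqref{grad q soliton} precisely by the substitution $q = -2B$, so $(M,g,f)$ is a complete gradient $q$-soliton for this choice of $q$. The remaining hypotheses transfer verbatim: $(M,g)$ is complete with non-negative sectional curvature, and $\int_M |\nabla f|^{n/(n-1)} < \infty$.

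With these identifications in place, I would simply invoke Theorem \ref{thm intro 02} to conclude that $M$ is $q$-flat, i.e. $-2B = 0$, so that $M$ is Bach-flat. I expect no genuine obstacle here: the only point that warrants a moment's care is verifying that the divergence-free condition really is dispensable, and this is guaranteed by the mechanism of the underlying Theorem \ref{thm05}, whose proof uses only the trace of the soliton equation, namely $\Delta f = \lambda n$, together with Karp's result on subharmonic functions on manifolds of non-negative sectional curvature---at no stage is ${\rm div}(q)$ invoked. Thus the argument is a clean specialization, and the substance of the corollary lies entirely in recording that the trace-free-only hypothesis applies to $B$ in all dimensions.
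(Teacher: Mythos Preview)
Your proposal is correct and follows exactly the paper's approach: the paper's proof simply states that the result follows directly from Theorem \ref{thm intro 02} (and notes the connection to Ho's work), and your identification of $(M,g,f)$ as a gradient $q$-soliton for $q=-2B$ together with the observation that only the trace-free hypothesis is needed is precisely what underlies that invocation.
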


\begin{proof}
This follows directly from Theorem \ref{thm intro 02} and is essentially just a combination of \cite[Theorem 3.2, Corollary 3.3]{Ho}. 
\end{proof}

\begin{corollary}\label{cor B 06}
Let $(M, g, f)$ be a non-expanding complete gradient Bach soliton with nonnegative potential function and such that $f\in L^p(M)$ for some $p>1$. Since $B$ is trace-free, $M$ is Bach-flat.
\end{corollary}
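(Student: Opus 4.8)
The plan is to obtain this as an immediate specialization of Theorem \ref{thm06} to the tensor $q=-2B$. First I would record that, by the conventions fixed at the start of this section, the gradient Bach soliton equation \eqref{grad Bach soliton} is precisely the gradient $q$-soliton equation \eqref{grad q soliton} with $q=-2B$; hence $(M,g,f)$ is a complete gradient $q$-soliton for this choice of $q$.

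Next I would check that all hypotheses of Theorem \ref{thm06} are in force. The decisive structural point is that the Bach tensor is trace-free in every dimension, so $q=-2B$ is trace-free. Notice that, unlike the divergence-free based corollaries above, this is the only property of $B$ we need, which is exactly why no harmonic-Weyl-curvature assumption is required even when $n\geq 5$. The standing hypotheses of the corollary---completeness, non-expanding (so $\lambda\geq 0$), nonnegativity of $f$, and $f\in L^p(M)$ for some $p>1$---are identical to those of Theorem \ref{thm06}.

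It then remains only to invoke Theorem \ref{thm06}: tracing \eqref{grad q soliton} and using $\mathrm{tr}(q)=0$ yields $\Delta f=\lambda n\geq 0$, so $f$ is a nonnegative subharmonic $L^p$ function, and Yau's Lemma \ref{lemma Yau} forces $f$ to be constant. Therefore $\hess f=0$, whence tracing the soliton equation again gives $\lambda=0$ and then $q\equiv 0$; unwinding $q=-2B$ yields $B\equiv 0$, i.e. $M$ is Bach-flat. Because the whole argument is a direct substitution into an already-established theorem, I expect no genuine obstacle. The only item meriting a sanity check is the bookkeeping of the rescaling $q=-2B$ (and any implicit rescaling of $\lambda$ or $X$), but since the deduction relies solely on $\mathrm{tr}(q)=0$ and on the sign $\lambda\geq 0$---both insensitive to positive scaling---the conclusion is unaffected.
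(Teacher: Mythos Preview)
Your proposal is correct and follows exactly the paper's own approach: the paper's proof of this corollary consists of the single sentence ``This follows directly from Theorem~\ref{thm06}.'' Your write-up is in fact more detailed, and your closing remark about the harmless effect of the rescaling $q=-2B$ on $\operatorname{tr}(q)$ and on the sign of $\lambda$ is a useful sanity check that the paper leaves implicit.
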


\begin{proof}
This follows directly from Theorem \ref{thm06}.
\end{proof}

Following the work of Ho, we see that in the case where $n=4$ we in fact do not need the conformal condition on $X$ \cite[Theorem 3.2]{Ho}. As such, we proceed to show the following for $n\geq 5$.

\begin{corollary}\label{cor B 01 compact}
Let $(M, g, X)$, $n\geq 5$, be a compact Bach soliton such that $X$ is a conformal vector field. Since $B$ is trace-free, $X$ is Killing and $(M,g)$ must be Bach-flat. 
\end{corollary}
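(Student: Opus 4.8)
The plan is to obtain this as an immediate application of Theorem \ref{thm01 compact}, taking the abstract tensor to be $q = -2B$. The only structural fact about the Bach tensor that is needed is that $B$ is trace-free in every dimension $n \geq 4$; consequently $q = -2B$ is trace-free and, in particular, has constant (namely zero) trace. Since the corollary already places us in the compact setting with $X$ assumed conformal, every hypothesis of Theorem \ref{thm01 compact} is in force, and the conclusion that $X$ is Killing and $(M,g)$ is $q$-flat (i.e. Bach-flat) follows at once.

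To make the mechanism transparent, I would first record what the conformal hypothesis does: writing $\mathcal{L}_X g = \psi g$ for some $\psi \in C^\infty(M)$ and substituting into the $q$-soliton equation \eqref{q soliton} with $q = -2B$ gives $q = (\psi - 2\lambda) g$. Taking the trace and using that $B$, hence $q$, is trace-free forces $(\psi - 2\lambda)n = 0$, so $\psi$ is constant; equivalently, $\mathrm{tr}(q)$ is constant, which through the identity \eqref{eq tr} yields $\mathrm{div}(q) = 0$. At this point the compactness of $M$ lets me invoke \cite[Lemma 3.6]{Griffin} to conclude that $X$ is Killing, and the trace-free Quick Fact then upgrades this to $q$-flatness.

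The step worth dwelling on --- and the reason this corollary deserves separate attention for $n \geq 5$ --- is the divergence-free issue. In each of the earlier Bach corollaries (Corollaries \ref{cor B 01}--\ref{cor B stochastic}) we had to impose harmonic Weyl curvature precisely to guarantee that $B$ is divergence-free, a property that fails in general for $n \geq 5$. Here no such assumption appears, and the point I would want to stress is that the divergence-free property is not being borrowed from the intrinsic geometry of $B$ at all: the conformality of $X$ already collapses $q$ to a scalar multiple of the metric, so \eqref{eq tr} together with the trace-free condition supplies $\mathrm{div}(q) = 0$ for free. Thus the main obstacle is conceptual rather than computational --- recognizing that conformality of $X$ renders the $n \geq 5$ harmonic-Weyl hypothesis superfluous --- and once this is observed the corollary is a verbatim specialization of Theorem \ref{thm01 compact}.
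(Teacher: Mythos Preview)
Your proposal is correct and follows exactly the approach of the paper, which simply states that the corollary follows directly from Theorem~\ref{thm01 compact}. Your additional unpacking of the mechanism (how conformality forces $q=(\psi-2\lambda)g$, whence trace-freeness yields $\mathrm{div}(q)=0$ via \eqref{eq tr}) merely spells out the proof of Theorem~\ref{thm01 compact} itself and adds the helpful observation that this is why no harmonic Weyl assumption is needed for $n\geq 5$.
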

\begin{proof}
This follows directly from Theorem \ref{thm01 compact}.
\end{proof}

% \section*{Acknowledgements}
% \noindent The work of the first author was partially supported by CNPq, Brazil (Grant: 430998/2018-0) and FAPEPI (PPP-Edital 007/ 2018). The authors are grateful to Eric Bahuaud and Will Wylie for their helpful conversations while writing this paper.

\bibliographystyle{amsplain}

\begin{thebibliography}{30}

\bibitem{Alias} L.J. Al\'{\i}as, A. Caminha \and F.Y. do Nascimento,
{\em A maximum principle at infinity with applications to geometric vector fields},
J. Math. Anal. Appl. {\bf474} (2019), 242--247.

\bibitem{BH1} E. Bahuaud, D. Helliwell, {\em Short-time existence for some higher-order geometric flows}. Comm. Partial
Differ. Equ. 36(12), 2189--2207 (2011).

\bibitem{BH2} E. Bahuaud, D. Helliwell, {\em Uniqueness for some higher-order geometric flows.} Bull. Lond. Math. Soc., 47(6):980–995, 2015.

\bibitem{Pigola}A. Bisterzo, A. Farina and S. Pigola, {\em $L_{loc}^p$ positivity preservation and Liouville-type theorems.} arXiv:2304.00745v1 [math.AP] 3 Apr 2023.

\bibitem{Setti} 
Bianchi, D. and Setti, A.
{\em Laplacian cut-offs, porous and fast diffusion on manifolds and other applications}. Calc. Var. (2018) 57-4.

\bibitem{GR} Calviño-Louzao, E. Garc\'ia-R\'io and R. V\'azquez-Lorenzo, {\em  A note on compact Cotton solitons}, Class. Quantum Grav. 29 (2012) 205014 (5pp).

\bibitem{Caminha} L.J. Al\'{\i}as, A. Caminha \and F.Y. do Nascimento,
{\em A maximum principle related to volume growth and applications},
Ann. Mat. Pura Appl. {\bf200} (2021), 1637--1650.

\bibitem{CM} G. Catino, L. Mazzieri, {\em Gradient Einstein Solitons}, Nonlinear Analysis 132 (2016), pages 66--94.

\bibitem{Cao}H.D. Cao, Q. Chen, {\em On Bach-flat gradient shrinking Ricci solitons}, Duke Math. J. 162 (2013) 1149--116.

\bibitem{AW} A.W. Cunha, 
{\em Remarks on scalar curvature  of gradient Yamabe solitons with non-positive Ricci curvature},
Diff. Geom. and its Appl. {\bf 80} (2022), article 101843.

\bibitem{CLM} A.W. Cunha, E.L. De Lima, R. Mi, {\em Some Characterizations of Bach Solitons via Ricci Curvature}, Preprint, (2022).

\bibitem{CLL} A.W. Cunha, E. L. De Lima and H. F. De Lima,
{\em Revisiting Bach solitons via maximum principles}, Preprint, (2022).

\bibitem{CN} A.W. Cunha and A. N. Silva Jr., {\em On non-compact Cotton solitons}, Letters in Mathematical Physics, \textbf{112}:87, (2022).

\bibitem{WFR} A.W. Cunha, F. Roing and R. S. Lemos, {\em On Ricci-Bourguignon solitons: Triviality, uniqueness and scalar
curvature estimates}. J. Math. Anal. Appl., \textbf{526}, (2023), 127333.

\bibitem{Dwivedi} S. Dwivedi, {\em Some Results on Ricci-Bourguignon Solitons and Almost Solitons.} Canadian Mathematical Bulletin 64, no. 3 (2021): 591–604. 

\bibitem{Emery:89} M. \'{E}mery,
{\em Stochastic Calculus on Manifolds},
Springer-Verlag, Berlin, 1989.

\bibitem{Grigoryan} A. Grigor'yan,
{\em Analytic and geometric background of recurrence and non-explosion of the Brownian motion on Riemannian manifolds},
Bull. Amer. Math. Soc. {\bf36} (1999), 135--249.

\bibitem{Grigoryan:89} A. Grigor'yan,
{\em Stochastically complete manifolds and summable harmonic functions},
Izv. Akad. Nauk SSSR Ser. Mat. {\bf52} (1988), 1102--1108; translation in Math. USSR-Izv. {\bf33} (1989), 425--532.


\bibitem{FG}C. Fefferman, C.R. Graham, {\em The ambient metric}. Annals of mathematical studies, vol. 178, Princeton University Press, Princeton, NJ (2012).

\bibitem{GH} C. Graham, K. Hirachi, {\em The Ambient Obstruction Tensor and Q-Curvature}. IRMA Lect. Math. Theor. Phys., 8. 10.4171/013-1/3.   (2004).

\bibitem{Griffin} E. Griffin, {\em Gradient ambient obstruction solitons on homogeneous
manifolds}. Annals of Global Analysis and Geometry, 60:469--499, 2021.
9.

\bibitem{Helliwell} D. Helliwell, {\em Bach flow on homogeneous products}, SIGMA Symmetry Integrability
Geom. Methods Appl., 16:Paper No. 027, 35, 2020.

\bibitem{Ho} P.T. Ho, 
{\em Bach flow}, 
J. Geom. Phys. {\bf 133} (2018), 1--9.

\bibitem{Lopez}C. Lopez,  {\em Ambient obstruction flow}. Trans. Amer. Math. Soc. 370(6), 4111--4145 (2018).

\bibitem{Kar} S. Das and S. Kar, 
{\em Bach flows of product manifolds}, 
Int. J. Geom. Methods Mod. Phys. {\bf 9} (2012), article 1250039. 

\bibitem{Karp}  L. Karp, {\em On Stokes’ theorem for non-compact manifolds}, Proc.Amer. Math. Soc. \textbf{82} (1981)487–490.

\bibitem{Kim} S. Kim, {\em Rigidity of non-compact complete Bach-flat manifolds}, Journal of Geometry and Physics, Volume 60, Issue 4, 2010, Pages 637-642, ISSN 0393-0440, https://doi.org/10.1016/j.geomphys.2009.12.014.

\bibitem{Kisisel}A. U. O. Kisisel, O. Sarioglu and B. Tekin, {\em  Cotton flow}, Class. Quant. Grav., 25, (2008), 165019.

\bibitem{Lauret1} J. Lauret, {\em Geometric flows and their solitons on homogeneous spaces}, Rend. Semin. Mat. Univ. Politec. Torino, 74(1):55–93, 2016.

\bibitem{Lauret2} J. Lauret, {\em The search for solitons on homogeneous spaces}, arXiv preprint arXiv:1912.10117, 2019.

\bibitem{MM} L. Ma and V. Miquel, {\em Remarks on scalar curvature of Yamabe solitons}, Ann. Global Anal.
Geom., 42 (2012), 195--205.

\bibitem{PW}P. Petersen, W. Wylie, {\em Rigidity of gradient Ricci solitons}. Pacific J. Math. 241(2), 329--345 (2009).

\bibitem{Pigola:03} S. Pigola, M. Rigoli \and A.G. Setti,
{\em A remark on the maximum principle and stochastic completeness},
Proc. Amer. Math. Soc. {\bf131} (2003), 1283--1288.

\bibitem{Pigola:05} S. Pigola, M. Rigoli \and A.G. Setti,
{\em Maximum principles on Riemannian manifolds and applications},
Mem. American Math. Soc. {\bf822} (2005).

\bibitem{Poor} W.A. Poor, Differentail geometric structures, McGraw-Hill Book Co., New York, 1981. MR 83k:53002 Zbl 0493.53027.

\bibitem{Shin} J. Shin, 
{\em A note on gradient Bach solitons}, 
Diff. Geom. and its Appl. {\bf 80} (2022), article 101842.

\bibitem{Stroock:2000} D. Stroock,
{\em An Introduction to the Analysis of Paths on a Riemannian Manifold},
Math. Surveys and Monographs, volume 4, American Math. Soc (2000).

\bibitem{Yano Boch} K. Yano and S. Bochner, 
{\em Curvature and Betti Numbers}, 
Princeton University Press, Princeton (1949).

\bibitem{Yau:76} S.T. Yau,
{\em Some function-theoretic properties of complete Riemannian manifolds and their applications to geometry},
Indiana Univ. Math. J. \textbf{25} (1976), 659--670.

\bibitem{York} J.W. York Jr., {\em  Gravitational degrees of freedom and the initial-value problem}, Phys. Rev. Lett., 26, (1971) 1656–-8.





\end{thebibliography}

\end{document}